\def\ZZ{{\mathbb Z}}
\def\NN{{\mathbb N}}
\def\CC{{\mathbb C}}
\def\curlyF{{\mathcal F}}
\def\RR{{\mathbb R}}
\def\QQ{{\mathbb Q}}
\def\Qbar{{\overline\mathbb Q}}
\def\Q{{\mathbb Q}}
\def\Qbar{{\overline{\mathbb Q}}}
\def\Qtr{{{\mathbb Q^{\operatorname{tr}}}}}
\def\Ztr{{{\mathbb Z^{\operatorname{tr}}}}}
\def\Qab{{{\mathbb Q^{\operatorname{ab}}}}}
\def\OO{{\mathcal O}}
\DeclareMathOperator{\tr}{tr}
\DeclareMathOperator{\ab}{ab}
\DeclareMathOperator{\JR}{JR}
\newtheorem{theorem}{Theorem}[section]
\newtheorem*{theorem*}{Main Theorem}
\newtheorem{lemma}[theorem]{Lemma}
\newtheorem{prop}[theorem]{Proposition}
\newtheorem{corollary}[theorem]{Corollary}
\newtheorem{definition}[theorem]{Definition}
\newtheorem{remark}[theorem]{Remark}
\author{Caleb Springer}
\address{Department of Mathematics, University College London, Gower Street, London,
UK\\
and The Heilbronn Institute for Mathematical Research, Bristol, UK}
\email{c.springer@ucl.ac.uk}
\title{Definability and decidability for rings of integers in totally imaginary fields}
\date{\today}
\begin{document}

\maketitle
\begin{abstract}
We show that the ring of integers of $\Qtr$
 is existentially definable in the ring of integers of $\Qtr(i)$, where $\Qtr$ denotes the field of all totally real numbers. 
This implies that the ring of integers of $\Qtr(i)$ is undecidable and first-order non-definable in $\Qtr(i)$.
More generally, when $ L$ is a totally imaginary quadratic extension of a totally real field $K$,
we use the unit groups $R^\times$ of orders $R\subseteq \OO_L$ to produce existentially definable totally real subsets $X\subseteq \OO_L$. Under certain conditions on $K$, including the so-called $\JR$-number of $\OO_K$ being the minimal value $\JR(\OO_K) = 4$, we deduce the undecidability of $\OO_L$. 
  This extends previous work which proved an analogous result in the opposite case $\JR(\OO_K) = \infty$.
  In particular, unlike prior work, we do not require that $L$ contains only finitely many roots of unity.
\end{abstract}

%%%%%%%%%
%INTRODUCTION
%%%%%%%%%
\section{Introduction}

\subsection{A motivating example}
This paper is motivated by the desire to prove the following theorem concerning definability and decidability. For background and an overview of decidability and definability for infinite algebraic extensions of $\QQ$, we refer readers to the introduction of \cite{shlap}; see also the prequel to our work here \cite{Springer20}.
Recall that $\alpha\in \Qbar$ is \emph{totally real} if the  roots of its minimal polynomial are all real numbers. 
We write $\Qtr$ for the field of all totally real algebraic numbers and $\Ztr$ for its ring of integers, and we set $i = \sqrt{-1}$.
\begin{theorem} 
\label{thm:intro_motivation}
The ring of integers $\Ztr$ of $\Qtr$ is existentially definable in the ring of integers of $\Qtr(i)$. 
In particular, the first-order theory of the ring of integers of $\Qtr(i)$ is undecidable.
 \end{theorem}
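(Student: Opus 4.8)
The plan is to separate the statement into its two halves: the existential definability of $\Ztr$ in $\OO_{\Qtr(i)}$, which is the substantive claim, and the deduction of undecidability, which is then formal. For the second half I would invoke the fact that the first-order theory of $\Ztr$ is already undecidable --- this is precisely the regime $\JR(\Ztr) = 4$, in which Julia Robinson's method defines $\ZZ$ inside $\Ztr$. Granting an existential (hence first-order) formula $\phi(z)$ over $\OO_L$, with $L = \Qtr(i)$, that cuts out $\Ztr \subseteq \OO_L$, any decision procedure for the theory of $\OO_L$ would decide the theory of $\Ztr$ by relativizing every quantifier to $\phi$ and rewriting the ring operations; since the latter theory is undecidable, so is that of $\OO_L$. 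Thus the whole theorem reduces to producing $\phi$.

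To build $\phi$, write $K = \Qtr$, let $\sigma$ be the nontrivial element of $\Gal(L/K)$ (complex conjugation), and recall that $L$ is totally imaginary with $\OO_K = \Ztr$ as its totally real integers. The device I would use to manufacture totally real elements is the relative norm $N_{L/K}(z) = z\,\sigma(z)$, which is fixed by $\sigma$, hence totally real, and is totally nonnegative since at each archimedean place it equals $|z|^2$. Following the unit-group strategy flagged in the abstract, I would produce the existentially definable totally real set $X \subseteq \OO_L$ from the norm-one units $u \in R^\times$ of orders $R \subseteq \OO_L$, those satisfying $u\,\sigma(u) = 1$, whose traces $u + \sigma(u)$ land in $\OO_K$ with uniformly bounded conjugates. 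The point where $\JR(\Ztr) = 4$ does real work is in supplying enough totally positive integers with conjugates confined to a bounded interval to drive this construction. Because $L$ contains infinitely many roots of unity, one cannot simply quote Kronecker's theorem to isolate a finite torsion contribution, as in the case of finitely many roots of unity; the argument must instead track the norm-one units uniformly across orders, which is exactly why the passage to $R^\times$ is forced.

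Having produced an existentially definable set $X \subseteq \Ztr$ of totally real integers, I would bootstrap to all of $\Ztr$ using the ring structure. The key elementary point is that each $z \in \Ztr$, being a single algebraic number, has only finitely many conjugates, so $z + n$ is totally positive for a large enough rational integer $n$; hence every totally real integer is a difference of two totally nonnegative ones. Combining this with the norm description yields the candidate definition $z \in \Ztr \iff \exists\, x,y \in \OO_L \ \big(z = N_{L/K}(x) - N_{L/K}(y)\big)$, where the norm terms are existentially expressible over $\OO_L$ through the relation $\sigma(i) = -i$. Every such difference is automatically fixed by $\sigma$ and integral, so lies in $\Ztr$, giving the inclusion that comes for free; the content is the reverse inclusion, which rests on realizing every totally nonnegative element of $\Ztr$ as a relative norm.

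The main obstacle is precisely that middle step: showing that the totally real set carved out by the units and norms is exactly $\Ztr$ intersected with the totally nonnegative elements, and nothing larger, while remaining existential. I expect two technical difficulties. First, the representability of totally positive integers of $\Qtr$ as relative norms from $\OO_L$ is governed by local conditions at the finite primes, and it is here that the unit groups of orders must absorb the local discrepancies. Second, one must ensure that in the presence of infinitely many roots of unity the norm-one units do not smuggle in elements outside $\Ztr$; guaranteeing this uniformly across the orders $R$, rather than for a single field with finite torsion, is the essential new feature distinguishing this argument from the prior $\JR(\OO_K) = \infty$ case.
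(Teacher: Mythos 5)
Your reduction of undecidability to definability is fine and matches the paper: $\Ztr$ is undecidable by J.~Robinson, and an existential definition of $\Ztr$ in $\OO_{\Qtr(i)}$ lets one interpret the one theory in the other. The gap is in the definability half, and it is not a technical loose end but a circularity at the core of your formula. Your candidate definition
$z \in \Ztr \iff \exists\, x,y \in \OO_L\ \bigl(z = N_{L/K}(x) - N_{L/K}(y)\bigr)$
is mathematically correct as a \emph{characterization} of $\Ztr$ (indeed, since $\Qtr$ is closed under square roots, every totally nonnegative element of $\Ztr$ is already a square, hence trivially a norm, so your worry about local obstructions at finite primes is vacuous here). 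But it is not an existential formula in the language of rings: neither $\sigma$ nor $N_{L/K}(x) = x\sigma(x)$ is a symbol of that language, and your proposed fix --- expressing $\sigma$ ``through the relation $\sigma(i) = -i$'' by writing $x = a + bi \mapsto a - bi$ --- requires quantifying $a,b$ over the totally real elements, which is exactly the set being defined. (Quantifying $a,b$ over all of $\OO_L$ fails: $x = x + 0\cdot i$ is also a decomposition, so nothing forces $a,b$ to be the real and imaginary parts.) The same circularity infects your intermediate step, since ``norm-one units $u\sigma(u)=1$ with trace in $\OO_K$'' again presupposes access to $\sigma$ or to $\OO_K$. Defining conjugation on $\OO_L$ is essentially equivalent to the problem you are trying to solve.

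The paper's construction is designed precisely to avoid ever writing $\sigma$ in a formula. It takes the order $R_{2,L} = $ preimage of $\ZZ/2\ZZ$ in $\OO_L/2\OO_L$, which is existentially definable outright, and whose unit group is existentially definable by $\exists v\,(uv=1)$ together with the congruence conditions. Conjugation-invariance is then a \emph{theorem} about this set rather than a clause in the formula: since $R_{2,L}$ contains no roots of unity besides $\pm 1$ (Proposition~\ref{prop:rank}), Theorem~\ref{thm:inside} forces $\overline{u} = \pm u$ for every $u \in R_{2,L}^\times$, whence $(R_{2,L}^\times)^2 \subseteq R_{2,K}^\times$ (Corollary~\ref{cor:inside}). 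Explicit identities then show this definable totally real set is large enough: for $d$ with conjugates outside $(-2,0)$ one has $32d = u^2 + 1/u^2 - v^2 - 1/v^2$ with $u,v \in R_{2,K_1}^\times$, where the needed square roots $\sqrt{d^2+d}$ exist in $\Qtr$ by closure under square roots (Lemmas~\ref{lem:squares} and~\ref{lem:all_32}), and a final difference $4d = (d+1)^2 - (d-1)^2$ captures all of $\Ztr$ (Theorems~\ref{thm:def_tr_loose} and~\ref{thm:def_tr}). So your instinct about unit groups of orders and about building $\Ztr$ from totally nonnegative pieces is the right one, but the mechanism that makes the set definable without mentioning $\sigma$ --- trivial torsion in a congruence order forcing $\overline{u} = \pm u$ automatically --- is the missing idea, and without it your formula is not in the language.
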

 
 \begin{remark} 
 \label{rem:Razon}
\normalfont
 The paragraph below Corollary 3.19 in \cite{Koenigsmann14} mistakenly declared the ring of integers of $\QQ^{\tr}(i)$ to be decidable.\footnote{We thank Aharon Razon for pointing this out after reading an earlier draft of this paper.} 
 The reference given for this claim is \cite[Theorem~10.7]{Darniere00}, which actually proves the decidability of the overring $\OO_{\QQ^{tr}(i)}[\frac{1}{p}]$ where $p$ is any prime number. As seen above, the ring of integers $\OO_{\QQ^{tr}(i)}$ itself is undecidable.
  \end{remark}
  
Theorem \ref{thm:intro_motivation} is proven by extending and generalizing the methods which were first used to prove the {first-order} undecidability of rings of integers in fields such as $\QQ^{(2)}$  \cite{mruv} and $\QQ^{(d)}_{\text{ab}}$ for $d\geq 2$ \cite{Springer20}. Here, $F^{(d)}$ denotes the compositum of all extensions of degree at most $d$ of a field $F$, and $F^{(d)}_{\text{ab}}\subseteq F^{(d)}$ is the maximal abelian subfield.
   However, notice that the papers \cite{mruv, Springer20} additionally proved that the fields $\QQ^{(d)}_{\text{ab}}$ themselves  are undecidable by appealing to results of Videla \cite{vid} and Shlapentokh \cite{shlap} which imply that each $\QQ^{(d)}_{\text{ab}}$ has a first-order definable ring of integers.
   
The situation is entirely the opposite in the setting of Theorem \ref{thm:intro_motivation}. Indeed, Fried, Haran, and V\"{o}lklein proved that $\Qtr$ is decidable \cite{fhv}, which implies that $\Qtr(i)$ is also decidable. 
    It has been previously observed that, because J. Robinson proved that $\Ztr$ is undecidable \cite{jr}, the field $\Qtr$ is thus an example of a subfield of $\Qbar$ whose ring of integers $\Ztr$ does not admit any first-order definition in $\Qtr$. In the framework of Shlapentokh \cite[\S2.1]{shlap}, this is intuitively understood as a result of the fact that the field $\Qtr$ is too ``close'' to $\Qbar$, so it does not have enough ``expressive power'' to have a definable ring of integers. By Theorem~\ref{thm:intro_motivation}, we immediately see that $\Qtr(i)$ also has a first-order non-definable ring of integers, as expected.

 We pause to note that $\Qtr(i)$ is known to be an $\omega$-free PAC field; see \cite[Example 5.10.7]{Jarden11}. 
 Therefore, the non-definability of the ring of integers of $\Qtr(i)$ can also be deduced from the following result of Dittman and Fehm: If $R$ is a first-order definable subring of an $\omega$-free PAC field $L$, then $R$ is a field  \cite[Proposition 3]{DF21}.
 Their method, which builds upon work of Chatzidakis \cite{Chatzidakis19}, is entirely different from the techniques that we use below.

%%%%%%
%Main Results
%%%%%%%
\subsection{Main results}
\label{sec:main_results}
In general, this paper considers totally imaginary quadratic extensions of totally real fields. We begin with a definability result.
Say that a totally real field $K\subseteq \Qbar$ is \emph{closed under square roots} if $\alpha\in K$ whenever $\alpha$ is a totally real number such that $\alpha^2\in K$.

   \begin{theorem}[Theorem \ref{thm:def_tr}]
    If $K$ is a totally real field which is closed under square roots and $L$ is any quadratic totally imaginary extension of $K$, then $\OO_K$ is existentially definable in $\OO_L$.
    \end{theorem}

This theorem implies Theorem \ref{thm:intro_motivation} immediately, given J. Robinson's proof that $\Ztr$ is undecidable \cite{jr}. However, we can actually prove first-order undecidability for rings of integers $\OO_L$ in a more general context where $\OO_K$ is not known to be existentially definable in $\OO_L$. For any $n\geq1$, let $\zeta_n$ denote a primitive $n$-th root of unity.

\begin{theorem}[Theorem \ref{thm:undec_JR4}]
\label{thm:intro_undec_JR4}
    Let $S\subseteq \NN$ be infinite. Define $K_0 = \QQ(\{\zeta_n + \overline\zeta_n : n\in S\})$ and let $K_1$ be the maximal totally real subfield of $K_0^{(2)}$. If $K\supseteq K_1$ is a totally real field and $L$ is any totally imaginary quadratic extension of $K$, then $\OO_L$ is undecidable.
  \end{theorem}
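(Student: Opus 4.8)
The plan is to prove undecidability of $\OO_L$ by first-order defining a copy of $\ZZ$, with its ring structure, inside $\OO_L$; since the ring operations of $\ZZ$ are just the restrictions of those of $\OO_L$, it suffices to define $\ZZ$ as a subset, after which undecidability follows from the undecidability of $(\ZZ;+,\times)$. Because $K \supseteq K_1 \supseteq K_0$, the field $K$ retains all of the arithmetic of $K_0$ and $K_0^{(2)}$, so I would build a single formula that works uniformly for every such $K$. Throughout, write $c$ for the nontrivial element of $\Gal(L/K)$; since $L$ is a totally imaginary quadratic extension of the totally real $K$, it is a CM field and $c$ is complex conjugation in every embedding. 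Recall also that the free parts of $\OO_L^\times$ and $\OO_K^\times$ essentially coincide (equal rank in each finite layer, by Dirichlet), so units of $L$ differ from units of $K$ only by elements of the group $\mu(L)$ of roots of unity.

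First I would produce an existentially definable totally real subset $X \subseteq \OO_L$. Following the construction of Section~\ref{sec:main_results}, which exploits the unit groups $R^\times$ of orders $R \subseteq \OO_L$, the elements $u + c(u) \in \OO_K$ attached to units $u$ are totally real, while the norms $N_{L/K}(y) = y\,c(y)$ for $y \in \OO_L$ furnish an existentially definable supply of totally positive totally real elements; this is what replaces the "sum of squares" test for positivity, which is useless in $L$ because $-1 = i^2$ is already a square. Taking $u = \zeta_n$ a root of unity lying in $L$ shows that $X$ contains the cyclotomic elements $\zeta_n + \overline\zeta_n = 2\cos(2\pi/n)$. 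When a relevant totally real subfield happens to be closed under square roots, Theorem~\ref{thm:def_tr} upgrades this to an existential definition of its full ring of integers; in the generality at hand $X$ is simply a totally real existentially definable set containing all the witnesses needed below.

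The two hypotheses on $K$ then feed the definition of $\ZZ$ as follows. Since $K \supseteq K_0$ contains $\zeta_n + \overline\zeta_n$ for the infinitely many $n \in S$, the integers $2 + \zeta_n + \overline\zeta_n = 4\cos^2(\pi/n)$ are totally positive with all conjugates in $[0,4]$, so $\OO_K$ has infinitely many totally positive elements with conjugates in $[0,4]$, while by classical results going back to Kronecker only finitely many totally positive algebraic integers have all conjugates in $[0,t]$ for any $t < 4$; hence $\JR(\OO_K) = 4$. This is exactly the regime in which J. Robinson defined $\ZZ$ inside $\Ztr$, and I would adapt her argument. The crucial auxiliary elements are the half-angle cosines $\sqrt{2 + \zeta_n + \overline\zeta_n} = 2\cos(\pi/n) = \zeta_{2n} + \overline\zeta_{2n}$: being square roots of elements of $K_0$, they lie in $K_0^{(2)}$ and, being totally real, in its maximal totally real subfield $K_1 \subseteq K$. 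Thus the passage from $K_0$ to $K_1$ is precisely what guarantees that the square roots required to run the $\JR = 4$ argument are present in $\OO_L$ and reachable inside $X$. Combining the definable set $X$, the definable notion of total positivity via $N_{L/K}$, and these cyclotomic and half-angle witnesses, I would write down a first-order formula isolating $\ZZ \subseteq \OO_L$.

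The main obstacle is carrying out the definition of $\ZZ$ in the $\JR = 4$ regime. When $\JR(\OO_K) = \infty$ (the case of prior work) there are only finitely many small totally positive elements and J. Robinson's separation of the integers goes through almost directly; here the abundance of cyclotomic totally positive elements with conjugates in $[0,4]$ defeats the naive separation, and it must be replaced by one that actively uses those elements together with their half-angle square roots supplied by $K_0^{(2)}$. A second difficulty, absent from prior work, is that $L$ may contain infinitely many roots of unity, so the comparison of $\OO_L^\times$ with $\OO_K^\times$ and the resulting control of $X$ must be performed without assuming $\mu(L)$ is finite; reconciling this with the requirement that a single formula succeed uniformly for all $K \supseteq K_1$ is the delicate heart of the argument.
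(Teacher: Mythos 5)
Your proposal rests on a strategy the paper deliberately avoids, and the load-bearing claim behind it is false. You plan to prove undecidability by first-order defining a copy of $\ZZ$ inside $\OO_L$, and you justify this by asserting that the $\JR=4$ regime "is exactly the regime in which J.~Robinson defined $\ZZ$ inside $\Ztr$." Robinson did not define $\ZZ$ in $\Ztr$; her 1962 argument (in the improved form of Henson, Lemma~\ref{lem:henson}) proves undecidability from something much weaker: the existence of a parametrized family of definable subsets containing finite sets of arbitrarily large cardinality. Whether $\ZZ$ is first-order definable in $\Ztr$ is not established by her work, and your proposal supplies no mechanism for such a definition either --- your closing paragraph concedes that the "naive separation" is defeated by the abundance of small totally positive cyclotomic integers and "must be replaced" by an argument you do not give. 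This is precisely the missing core. The paper's actual proof never defines $\ZZ$: Theorem~\ref{thm:def_tr_loose} produces an existentially definable set $X$ with $\OO_{K_0} \subseteq X \subseteq \OO_K$; plugging $X$ into the Siegel four-squares formula of Theorem~\ref{thm:def_siegel} yields a parametrized family of sets sandwiched between $\{x\in\OO_{K_0} : 0\ll x\ll a/b\}$ and $\{x\in\OO_K : 0\ll x\ll a/b\}$; and since $\JR(\OO_{K_0})=\JR(\OO_K)=4$ is an \emph{attained} minimum (Kronecker's theorem plus the elements $2+\zeta_n+\overline\zeta_n$, $n\in S$), these sets are finite of arbitrarily large cardinality as $a/b\to 4^-$, so Lemma~\ref{lem:henson} finishes the proof (this is Theorem~\ref{thm:undec_gen}).

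Two further steps in your construction of $X$ also fail. First, you claim $X$ contains $\zeta_n+\overline\zeta_n$ by "taking $u=\zeta_n$ a root of unity lying in $L$," but $L$ is an arbitrary totally imaginary quadratic extension of $K$ (for instance $L=K(i)$) and need not contain $\zeta_n$ for any $n\in S$. The paper instead obtains $\OO_{K_0}\subseteq X$ from units $u=2b+1$ of the non-maximal order $R_{2,L}$ with $b=d\pm\sqrt{d^2+d}$: the hypothesis $K\supseteq K_1$ exists exactly to guarantee that these square roots (not your half-angle cosines) lie in $K$, and Corollary~\ref{cor:inside} --- which exploits the fact that $R_{2,L}$ has only the roots of unity $\pm1$, the device that handles $\#\mu(L)=\infty$ --- forces $u^2$ to be totally real, giving $32d=u^2+1/u^2-v^2-1/v^2$. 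Second, your positivity certificate via norms $N_{L/K}(y)=y\,c(y)$ only yields one of the two containments Henson's criterion needs: norms are totally nonnegative, so the defined sets land inside the finite Kronecker sets, but there is no reason the needed elements of $\OO_{K_0}$ are integral norms from $\OO_L$, so you cannot show the defined sets are large. The paper's solution is to keep Siegel's sums of four squares but restrict all witnesses to the totally real definable set $X$, which restores both containments at once.
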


In this theorem, the  so-called $\JR$-number is $\JR(\OO_K) = 4$, i.e., the minimal possible value, while the totally real fields  $K$ considered in \cite{mruv, Springer20} have the maximal possible value $\JR(\OO_K) = \infty$; see Section \ref{sec:dec_tr} for definitions and notation. We also provide a version of our undecidability result, namely Theorem \ref{thm:undec_gen}, which does not place a restriction on the value of $\JR(\OO_K)$.

%%%%%%
%Proof method
%%%%%%%
\subsection{Proof method: Leveraging unit groups}
\label{sec:proof_method}
Within the body of literature concerning undecidability results for rings of integers of algebraic extensions of $\QQ$, the totally real subfields of $\Qbar$ and their totally imaginary quadratic extensions have received special attention. There are multiple methods which can be used in this context, but we focus on the work stemming from J. Robinson for now, and delay the discussion of elliptic curves until the following section.

J. Robinson \cite{jr} provided a general sufficient condition for proving the undecidability of a ring of algebraic integers $\OO$, and we follow Videla \cite{vid-cycl} by using an improved version credited to Henson \cite{vdd}: If there is a parametrized family $\curlyF$ of definable subsets of $\OO$ which contains finite sets of arbitrarily large cardinality, then the first-order theory of $\OO$ is undecidable.
This sufficient condition, along with a theorem of Siegel \cite{siegel}, leads to a strategy that can be applied to $\OO_K$ when $K$ is a totally real field. Indeed, we use the parametrized family of subsets $\{X_t\}_{t\in \QQ}$, where $X_t$ contains the elements $\alpha\in \OO_K$ whose conjugates all lie in the real interval~$(0,t)$. Determining whether $\{X_t\}_{t\in \QQ}$ contains finite sets of arbitrarily large size is related to the $\JR$-number of $\OO_K$; see Section \ref{sec:dec_tr} for more details. This strategy enabled J. Robinson to prove the undecidability of the rings of integers of both $\Qtr$ and $\Q^{(2)}_{\tr} = \QQ(\{\sqrt n : n\in \NN\})$.

After using this method to prove $\OO_K$ is undecidable for a totally real field $K$, we wish to do the same for the rings of integers $\OO_L$ for all totally imaginary quadratic extensions $L$~of~$K$.
To extend the $\JR$-number argument to such a field, a key ingredient is the fact that $[\OO_L^\times : \mu(L) \OO_K^\times]\leq 2$ where $\mu(L)$ is the set of roots of unity in $L$. 
Therefore, if $N = \#\mu(L)< \infty$ is finite, then $(\OO_L^\times)^{2N}\subseteq K$ is an existentially definable totally real subset of $\OO_L$. 
By using sums and difference of powers of units, we can produce a useful parametrized family of subsets of $\OO_L$ analogous to the sets $X_t$ defined above. Undecidability is thereby proved for $\OO_L$ in many cases, 
 including $L = \QQ^{(2)}$ or more generally $L = \QQ^{(d)}_{\ab}$ for $d\geq 2$; see \cite{mruv, Springer20}.

However, the assumption $\#\mu(L) < \infty$ above implies that this method cannot yet apply when $L = \Qtr(i)$.
 To remedy this, we choose to instead work with  the unit group $R^\times$ of an existentially definable non-maximal order $R\subseteq \OO_L$ with $\mu(R) = \{\pm1\}$ trivial.
 In this case, we can show that $(R^\times)^2$ is totally real set. 
 By  choosing a suitable ring $R\subseteq \OO_L$, we obtain the foundation for developing a unit group-based argument similar to the ones appearing in \cite{mruv, Springer20} which allows $L$ to contain infinitely many roots of unity.

%%%%%%%%%
%COMPARE AV
%%%%%%%%%
\subsection{A comparison with abelian varieties}
\label{sec:AV}
We conclude this section by comparing our main theorems to similar results which leverage elliptic curves instead of unit groups. There are many papers which use of elliptic curves, or abelian varieties in general, to prove various undecidability results, including \cite{Denef80, vid-cycl, Poonen02, MRS22}. We refer to \cite{Shlap09} for additional background.  As an example which is relevant to the consideration of totally imaginary extensions of totally real fields, consider the following theorem of Shlapentokh.

\begin{theorem}[Main Theorem B, \cite{Shlap09}]
\label{thm:Shlap09}
Let $K$ be a totally real algebraic extension of $\QQ$ which has a totally real extension of degree $2$, and let $K'$ be a finite extension of $K$ such that there exists an elliptic curve $E$ defined over $K'$ with $E(K')$ finitely generated and of positive rank. If $L$ is a quadratic totally imaginary extension of $K$, then $\ZZ$ is existentially definable in the ring of integers $\OO_K$ and Hilbert's Tenth Problem is unsolvable over $\OO_K$.
 \end{theorem}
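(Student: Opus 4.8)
The plan is to reduce the statement to the construction of a \emph{Diophantine model} of the ring $(\ZZ, +, \cdot)$ inside $\OO_K$. Once such a model is in hand, the negative solution of Hilbert's Tenth Problem over $\ZZ$ (the Davis--Putnam--Robinson--Matiyasevich theorem) immediately yields both the existential definability of $\ZZ$ and the unsolvability of Hilbert's Tenth Problem over $\OO_K$. The raw material for the model is the elliptic curve $E/K'$: fixing a point $P \in E(K')$ of infinite order, the map $n \mapsto nP$ embeds $\ZZ$ into $E(K')$, and since the group law on $E$ is given by rational functions over $K'$, the graph of the addition relation $(nP, mP, (n+m)P)$ is existentially definable. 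Thus addition transfers for free, and the whole difficulty concentrates in two steps: (i) carving out the image $\{nP : n\in \ZZ\}$ by an existential formula, and (ii) recovering integer multiplication from the geometry of $E$.

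For step (i) I would lean on the hypothesis that $E(K')$ is \emph{finitely generated of positive rank}. This is what prevents the intended definition of $\{nP\}$ from accidentally capturing extraneous points -- a genuine danger over an infinite extension, where Mordell--Weil groups are typically of infinite rank -- since it forces every point in play to be $nP$ up to a controlled torsion contribution. For step (ii), the standard device, following Denef, Poonen, and Shlapentokh, is to read the integer $n$ off from the \emph{valuation data} of the coordinates of $nP$: the denominator of $x(nP)$ grows quadratically in $n$, and its divisibility by a fixed prime detects divisibility of $n$, which is enough to encode the graph $\{(k,\ell,k\ell)\}$ Diophantinely.

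The remaining and, I expect, most delicate ingredient is \emph{archimedean bounding}, which is where the totally real hypotheses enter. Over an infinite algebraic extension there is no uniform degree bound, so to realize the sets above over $\OO_K$ rather than merely over $K'$ one needs an existentially definable supply of elements of $\OO_K$ whose conjugates are all controlled in size. Here I would invoke Siegel's theorem in tandem with the norm form of the degree-$2$ totally real extension of $K$, together with the definite norm form $N_{L/K}$ coming from the totally imaginary quadratic extension $L$: the former produces totally positive integers with bounded conjugates, while the latter lets one pin down integrality and total positivity by an existential condition, in the same spirit as the unit-group bounding of Section~\ref{sec:proof_method}.

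Assembling these pieces, I anticipate that the principal obstacle is the \emph{descent from $K'$ down to $\OO_K$}: the curve lives over the finite extension $K'$, yet the model must be defined over $\OO_K$, so one must simultaneously control the infinitely many archimedean places of $K$ while re-expressing the $K'$-coordinates of $nP$ in terms of a bounded set of $\OO_K$-data. I would handle this by combining a weak vertical method for defining integrality at infinitely many primes with the bounding sets above, and I expect the verification that \emph{multiplication} -- not merely addition -- is existentially definable on the transferred model to be the crux, since it is exactly the passage from the cyclic group $\langle P\rangle$ to the full ring structure of $\ZZ$ that the valuation-theoretic encoding must supply.
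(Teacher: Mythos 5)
You should first be aware that the paper does not prove this statement at all: Theorem~\ref{thm:Shlap09} is quoted from Shlapentokh \cite{Shlap09} (her ``Main Theorem B'') purely as a point of comparison in Section~\ref{sec:AV}, to contrast elliptic-curve methods with the unit-group method the paper actually develops. So there is no proof in this paper to match your attempt against; the only meaningful comparison is with the proof in \cite{Shlap09} itself.

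Measured against that, your outline names the correct family of ideas: a Diophantine model of $(\ZZ,+,\cdot)$ built from $\langle P\rangle$ for a point $P$ of infinite order, finite generation of $E(K')$ as the hypothesis that keeps the model uncontaminated (exactly the issue the paper highlights via Theorem~\ref{thm:inf_rank}: over large fields Mordell--Weil groups have infinite rank, so this hypothesis is indispensable), divisibility properties of denominators of $x(nP)$ to recover multiplication, and norm-equation bounding exploiting the totally real degree-$2$ extension and the totally imaginary extension $L$. That is genuinely the skeleton of Shlapentokh's argument. But as a proof it has gaps at every point where the real work lies: your step (i) (an existential definition of $\{nP : n \in \ZZ\}$ rather than of all of $E(K')$), your step (ii) (the divisibility-to-multiplication encoding over the ring of integers of an \emph{infinite} extension, with no uniform degree bound and infinitely many primes to control), and the descent from $K'$-coordinates to $\OO_K$-data are each substantial theorems that occupy most of \cite{Shlap09}, and your proposal defers all of them (``I would lean on,'' ``I would invoke,'' ``I would handle this by''). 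In particular, the assertion that finite generation ``forces every point in play to be $nP$ up to a controlled torsion contribution'' is a restatement of what must be proved, not an argument: carving out $\langle P\rangle$ existentially is precisely where the weak vertical method and the delicate choice of divisibility conditions enter. So your text is a reasonable map of how one would reconstruct Shlapentokh's proof, but it is not itself a proof, and there is no proof in the present paper for it to be compared with.
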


Although Theorems \ref{thm:intro_undec_JR4} and \ref{thm:Shlap09} are similar insofar as they both apply in the context of totally imaginary quadratic extensions of totally real subfields of $\Qbar$, it is instructive to also note the differences and complementary strengths. Heuristically, it is easiest to apply the elliptic curve-based methods to relatively ``small'' algebraic extensions of $\QQ$ over which it is easy to find elliptic curves with a finitely generated group of rational points. In contrast, the unit group-based methods work best for ``bigger'' fields in which there is an abundance of units available for manipulation. We can make this more precise with a couple of examples.

The prototypical example of an infinite algebraic extension $K\supseteq \QQ$ which satisfies the hypotheses of Theorem \ref{thm:Shlap09} is a $\ZZ_p$-extension of $\QQ$; see \cite[\S10]{Shlap09}. 
These $\ZZ_p$-extensions are difficult to handle with the unit group-based methods because there is at most one subextension of $K$ of any given degree, hence a paucity of units. We also note that Theorem~\ref{thm:Shlap09} proves that Hilbert's Tenth Problem is unsolvable, rather than only showing that the first-order theory is undecidable, and it also applies when $[K: \QQ]< \infty$.

On the other hand, when deploying elliptic curves, we emphasize that it is necessary for the group of rational points to be a finitely generated group. This is a core requirement of the proof method, and  the restriction would remain for any straightforward variant or generalization which uses abelian varieties instead of elliptic curves, such as \cite[Theorem~1.1]{MRS22}.  Therefore, the following theorem of Fehm and Petersen shows that this general method is not useable when the field $K$ is \emph{large}, in the sense of Pop \cite{Pop96}.

\begin{theorem}[Theorem 1.2, \cite{FP10}]
\label{thm:inf_rank}
If $L\subseteq \Qbar$ is a large field and $A/L$ is an abelian variety, then $A(L)$ has infinite rank.
 \end{theorem}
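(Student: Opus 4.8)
Recall that $L$ being \emph{large} (equivalently, ample) means that every smooth geometrically integral $L$-curve carrying an $L$-rational point has infinitely many, which is equivalent to $L$ being existentially closed in the Laurent series field $L((t))$. Since $L\subseteq\Qbar$ has characteristic zero, torsion is irrelevant to the rank, so it suffices to show $\rank A(L)\geq n$ for every $n\geq 1$; we may assume $\dim A>0$. The plan is to first exhibit rank growth for $A$ over a transcendental extension of $L$, and then to descend the new independent points back to $L$ using largeness.

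For the rank growth I would use either of two complementary mechanisms. Most directly, over $L((t))$ the constant abelian variety $A$ acquires the points of its formal group: the kernel of reduction $A(L((t)))=A(L[[t]])\to A(L)$ is, via the formal logarithm, a torsion-free and uniquely divisible group of infinite $\QQ$-dimension. A point $Q$ in this kernel is independent of $A(L)$, since any relation $aQ+\sum b_iP_i=0$ with $P_i\in A(L)$ reduces modulo $t$ to a relation among the $P_i$ and then forces $a=0$ by torsion-freeness; hence $\rank A(L((t)))$ is infinite whenever $\rank A(L)$ is finite. For the alternative geometric route I would embed $A$ in some $\PP^N$ and cut with general hyperplanes through the origin; a Bertini argument over the infinite field $L$ yields a smooth geometrically integral curve $C\subseteq A$ through $0$ which generates $A$, so that the universal property of the Jacobian provides a surjection $\Jac(C)\twoheadrightarrow A$ defined over $L$. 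Spreading $C$ out into a non-isotrivial family over an auxiliary $L$-curve $B$ then produces, through this surjection, points of $A(L(B))$ of arbitrarily large rank.

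The hard part, and the technical heart of the theorem, is the descent of this rank growth from $L((t))$ or $L(B)$ down to $L$ itself. The obstruction is logical: the statement ``$\rank A\geq n$'' is not existential, so existential closedness of $L$ in $L((t))$ does not transfer the rank increase back to $L$ for free, and a naive appeal to N\'eron's specialization theorem is equally insufficient, since the bad locus of specializations is only known to be thin while a large field need not be Hilbertian. The resolution I would pursue is to encode the independence of the new points by \emph{existentially definable} conditions --- for instance, prescribing divisibility or torsion relations on auxiliary points that certify non-degeneracy --- so that the existence of $n$ independent points becomes the content of a genuinely existential sentence, true over $L((t))$ and hence realizable over $L$ by existential closedness. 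Reconciling the rank-growth construction with such an existential certificate of independence, so that largeness can actually be brought to bear on the descent, is where the real difficulty lies.
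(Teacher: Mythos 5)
Your proposal is not a proof, and it is worth saying first that the paper itself does not contain one either: Theorem \ref{thm:inf_rank} is quoted as background from Fehm and Petersen \cite{FP10}, with the elliptic-curve case credited to Tamagawa via \cite[Proposition 1]{Kobayashi06}, so the only question is whether your argument would establish the statement. It does not, and you say so yourself: the final step, turning rank growth over $L((t))$ or $L(B)$ into rank growth over $L$, is explicitly left open (``is where the real difficulty lies''). That step is not a technicality to be deferred --- it is the entire content of the theorem. Largeness only transfers \emph{existential} information from $L((t))$ back to $L$, and, as you correctly observe, ``$\rank A(L)\geq n$'' is not existential; no encoding of linear independence of points on an abelian variety by divisibility or torsion conditions in an existential formula is known, and your sketch offers no mechanism for producing one. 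The formal-group observation is correct (the kernel of reduction $A(L[[t]])\to A(L)$ is indeed uniquely divisible and of infinite rank), but it is unhelpful: those points live over $L((t))$, and nothing forces any trace of their independence to survive in $A(L)$.

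The actual proof of Fehm--Petersen sidesteps your descent problem entirely, and this is the missing idea. One produces a smooth geometrically integral curve $C$ through $0$ that generates the ambient abelian variety and is \emph{not} a translate of an abelian subvariety: for $\dim A\geq 2$ a suitable curve inside $A$ (e.g.\ by Bertini, as in your second mechanism), and for an elliptic curve $E$ a suitable curve of genus at least $2$ inside $E\times E$, noting $\rank (E\times E)(L) = 2\rank E(L)$. Largeness is then used exactly once, in its most naive form: $C$ is a smooth curve with the $L$-point $0$, hence $C(L)$ is infinite --- no transfer from $L((t))$ is needed at all. Now suppose $A(L)$ had finite rank. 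Then $\Gamma = A(L)$ (resp.\ $(E\times E)(L)$) is a finite-rank subgroup, and the Mordell--Lang conjecture --- a theorem in characteristic zero by Faltings, extended from finitely generated to finite-rank subgroups by Hindry and McQuillan --- says that $C(\Qbar)\cap\Gamma$ is a finite union of translates of abelian subvarieties contained in $C$; since $C$ is not such a translate, this intersection is finite, contradicting the fact that it contains the infinite set $C(L)$. In short, the ``certificate of independence'' you were trying to make existential comes instead from deep diophantine geometry, and the model-theoretic characterization of largeness via $L((t))$ plays no role in the rank argument. Your plan attacks the problem precisely at the point where it is hardest and, as far as is known, impossible.
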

The first proof of this theorem in the case of elliptic curves is credited to Tamagawa; see Kobayashi \cite[Proposition 1]{Kobayashi06}. We refer to \cite{FJ74,LR08, MR18, Petersen06} for some additional results on finitely and non-finitely generated groups of rational points.
 
 Because any algebraic extension of a large field is itself large, Theorem~\ref{thm:inf_rank} shows that if $K\subseteq \Qbar$ is a large totally real field, then abelian variety-based methods such as Theorem~\ref{thm:Shlap09} cannot handle $K$ or its quadratic totally imaginary extensions. However, it is clear that the unit group-based methods presented in this paper can work for large fields because the field $ \Qtr$ is large.
 It is also conjectured that $\Qab$ is a large field (see \cite[\S3]{BSF13} for background and an overview), and 
 it is easy to construct extensions $L \supseteq \Qab$ which are covered by Theorem~\ref{thm:intro_undec_JR4}.
 It would be interesting to determine whether or not the methods of this paper can be refined to prove the undecidability of the ring of integers of $\Qab$ itself.

\subsection*{Acknowledgments}
The author thanks Arno Fehm for assistance with references, and Aharon Razon for sharing Remark \ref{rem:Razon}. 
Additional thanks to Kirsten Eisentr\"ager, Jochen Koenigsmann, Alexandra Shlapentokh, and Carlos Videla for their helpful comments.
This work was supported by the Additional Funding Programme for Mathematical Sciences, delivered by EPSRC (EP/V521917/1) and the Heilbronn Institute for Mathematical Research.

%%%%%%%%%
%A USEFUL SUBRING
%%%%%%%%%
 \section{Non-maximal orders and units}

 As indicated in Section \ref{sec:proof_method}, given a totally real field $K$ and a totally imaginary quadratic extension $L$, we want a subring $R\subseteq \OO_L$ which does not contain any nontrivial roots of unity. We start by defining the desired ring, then proceed to analyze its group of units.
 
 \subsection{A useful non-maximal subring}
 \begin{definition} 
 Given an integer $m\geq 1$ and a field $L\subseteq \Qbar$, let $R_{m, L}$ be the subset of $\OO_L$ defined by 
 the positive existential formula $\varphi_m(x)$, given as follows:
$$
	\exists a \ \ (x - ma)(x - 1 - ma)\dots (x - (m-1) - ma) = 0
$$
  \end{definition}
 The following properties of $R_{m,L}$ are immediate from the definition.
 
\begin{lemma} 
\label{lem:subring}
Let $L\subseteq \Qbar$ be a field and let $m\geq 1$ be an integer.
\begin{enumerate}[(a)]
	\item If $K\subseteq L$ is a subfield, then $R_{m,L}\cap K = R_{m,K}$.
	\item $R_{m,L}$ is an existentially definable subring of $\OO_L$ which contains $m\OO_L$.
	\item $R_{m,L}\subseteq \OO_L$ is non-maximal if and only if $m\geq 2$ and $L \neq \QQ$. 
\end{enumerate}   
 \end{lemma}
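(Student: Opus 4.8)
The plan is to first reinterpret the defining formula $\varphi_m$ in a more transparent form and then read off all three parts. Observe that $x\in\OO_L$ satisfies $\varphi_m(x)$ precisely when there exist $a\in\OO_L$ and $j\in\{0,1,\dots,m-1\}$ with $x=j+ma$; equivalently, $x$ is congruent modulo $m\OO_L$ to some rational integer. Writing $\pi_m\colon\OO_L\to\OO_L/m\OO_L$ for the reduction map, this reads
$$R_{m,L}=\pi_m^{-1}\bigl(\,\text{image of }\ZZ\text{ in }\OO_L/m\OO_L\,\bigr).$$
I would record this reformulation at the outset, since it renders parts (a) and (b) essentially formal.

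Given the reformulation, part (b) follows because the image of $\ZZ$ is the prime subring of $\OO_L/m\OO_L$, hence a subring, and the preimage of a subring under the ring homomorphism $\pi_m$ is again a subring; existential definability is immediate from $\varphi_m$ being positive existential, and $m\OO_L=\ker\pi_m$ is contained in $R_{m,L}$ since $0$ lies in the image of $\ZZ$. For part (a), the inclusion $R_{m,K}\subseteq R_{m,L}\cap K$ is clear directly from the formula. For the reverse inclusion the only point requiring care is the identity $m\OO_L\cap\OO_K=m\OO_K$: if $y=ma$ with $a\in\OO_L$ and $y\in\OO_K$, then $a=y/m\in K$ is an algebraic integer and so $a\in\OO_K$. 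Together with $\OO_L\cap K=\OO_K$, this shows that an integer congruence modulo $m\OO_L$ satisfied by an element of $K$ is already a congruence modulo $m\OO_K$, yielding $R_{m,L}\cap K\subseteq R_{m,K}$.

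For part (c), the two degenerate cases are direct: when $m=1$ the formula reads $\exists a\,(x-a)=0$, which every $x$ satisfies, so $R_{1,L}=\OO_L$; and when $L=\QQ$ every integer is congruent modulo $m$ to some $j\in\{0,\dots,m-1\}$, so $R_{m,\QQ}=\ZZ=\OO_{\QQ}$. The substantive direction is to prove $R_{m,L}\subsetneq\OO_L$ when $m\ge2$ and $L\neq\QQ$, and here the main obstacle is that $L$ may be of infinite degree over $\QQ$, so that $\OO_L$ need not be a finitely generated $\ZZ$-module and $\OO_L/m\OO_L$ is awkward to control directly. I would circumvent this by reducing to a finite subextension: since $L\neq\QQ$ there is a number field $K$ with $\QQ\subsetneq K\subseteq L$, and by part (a) it suffices to exhibit an element of $\OO_K\setminus R_{m,K}$, because such an element then lies in $\OO_L\setminus R_{m,L}$. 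As $K/\QQ$ is finite of degree $n\ge2$, the ring $\OO_K$ is free of rank $n$ over $\ZZ$, so $\OO_K/m\OO_K\cong(\ZZ/m\ZZ)^n$ has $m^n$ elements while the image of $\ZZ$ has only $m$; since $m\ge2$ gives $m^n>m$, some residue avoids the image of $\ZZ$, and any lift of it is the desired element of $\OO_K\setminus R_{m,K}$.
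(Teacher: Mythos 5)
Your proposal is correct and takes essentially the same approach as the paper: both reinterpret $R_{m,L}$ as the preimage of the image of $\ZZ$ under the reduction map $\OO_L \to \OO_L/m\OO_L$ and deduce all three parts from that description. The only difference is that you spell out details the paper treats as obvious, namely the identity $m\OO_L \cap \OO_K = m\OO_K$ for part (a) and, for part (c), the reduction to a number field $K$ together with the count $\#(\OO_K/m\OO_K) = m^{[K:\QQ]} > m$.
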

  \begin{proof} 
By the definition of $R_{m,L}$ and the formula $\varphi_m(x)$ above, the first claim is obvious.
Moreover, $R_{m,L}$ is the preimage of $\ZZ/m\ZZ$ under the natural surjective map $\OO_L \to \OO_L/m\OO_L$. Therefore, $R_{m,L}$ is a subring of $\OO_L$ and non-maximal precisely when $\ZZ/m\ZZ \neq \OO_L/m\OO_L$, i.e., when $m \geq 2$ and $L\neq \QQ$. 
  \end{proof}

Before we analyze the unit group $R_{m,L}^\times$, we recall some general elementary facts. In essence, this theorem clarifies that any ring containing an algebraic unit $u$ also contains $u^{-1}$, and computes the rank of the unit groups of any ring of algebraic integers. Given a set $S\subseteq \Qbar$, let $\mu(S)$ denote the set of roots of unity contained in $S$.
  
  \begin{theorem} 
  \label{thm:gen_non-max_units}
  Let  $L\subseteq \Qbar$ be a number field with $r$ real and $2s$ imaginary embeddings.
  \begin{enumerate}[(a)]
	  \item If $u\in \OO_L^\times$, then $u^{-1}\in \ZZ[u]$.
	  \item If $R\subseteq \OO_L$ is any subring, then $R^\times = \OO_L^\times \cap R$.
	  \item If $\OO\subseteq \OO_L$ is any suborder, then $\OO^\times \cong \mu(\OO) \times \ZZ^{r + s - 1}$.
  \end{enumerate} 
   \end{theorem}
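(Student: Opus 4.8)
The plan is to prove the three parts in sequence, obtaining (b) as a direct consequence of (a) and then deducing (c) from (b) by comparison with the full unit group $\OO_L^\times$ via Dirichlet's unit theorem.

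For part (a), I would work with the minimal polynomial $f(x) = x^n + a_{n-1}x^{n-1} + \cdots + a_1 x + a_0$ of $u$ over $\QQ$, which is monic with coefficients $a_i \in \ZZ$ since $u$ is an algebraic integer. The crucial point is that $a_0 = \pm 1$: because $u \in \OO_L^\times$ we have $N_{L/\QQ}(u) = \pm 1$, and since $a_0$ agrees up to sign with the norm of $u$ from $\QQ(u)$ to $\QQ$, it is a rational integer whose inverse is again a rational integer, forcing $a_0 = \pm 1$. Rearranging $f(u) = 0$ then gives
$$u \cdot \bigl(u^{n-1} + a_{n-1}u^{n-2} + \cdots + a_1\bigr) = -a_0 = \mp 1,$$
so that $u^{-1} = \mp\bigl(u^{n-1} + a_{n-1}u^{n-2} + \cdots + a_1\bigr) \in \ZZ[u]$.

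Part (b) should follow almost immediately. The inclusion $R^\times \subseteq \OO_L^\times \cap R$ is clear, since any $u \in R^\times$ satisfies $u, u^{-1} \in R \subseteq \OO_L$. Conversely, if $u \in \OO_L^\times \cap R$, then part (a) gives $u^{-1} \in \ZZ[u]$; as $R$ is a subring containing $u$, and hence all of $\ZZ[u]$, we conclude $u^{-1} \in R$, so $u \in R^\times$.

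For part (c), the strategy is to compare $\OO^\times$ with $\OO_L^\times$, for which Dirichlet's unit theorem supplies $\OO_L^\times \cong \mu(L) \times \ZZ^{r+s-1}$. By part (b) applied to $R = \OO$ we have $\OO^\times = \OO_L^\times \cap \OO$, so $\OO^\times$ is a subgroup of $\OO_L^\times$. The main step, which I expect to be the only genuine obstacle, is to show this subgroup has \emph{finite index}. To see this, let $f = [\OO_L : \OO]$ be the finite index of $\OO$ as a full sublattice of $\OO_L$, so $f\OO_L \subseteq \OO$. If $u \in \OO_L^\times$ lies in the kernel of the reduction map $\OO_L^\times \to (\OO_L/f\OO_L)^\times$, then $u \in 1 + f\OO_L \subseteq \OO$, whence $u \in \OO_L^\times \cap \OO = \OO^\times$. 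Thus $\OO^\times$ contains the kernel of a homomorphism into the finite group $(\OO_L/f\OO_L)^\times$, so it has finite index in $\OO_L^\times$. Any finite-index subgroup of $\mu(L) \times \ZZ^{r+s-1}$ retains the free rank $r+s-1$, while its torsion subgroup consists precisely of the roots of unity lying in $\OO$, namely $\mu(\OO)$. Therefore $\OO^\times \cong \mu(\OO) \times \ZZ^{r+s-1}$, as claimed.
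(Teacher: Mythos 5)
Your proof is correct and takes essentially the same approach as the paper: part (a) via the minimal polynomial with constant term $\pm 1$, part (b) as an immediate consequence, and part (c) by combining Dirichlet's unit theorem with reduction modulo the index $[\OO_L : \OO]$ to see that $\OO^\times$ has full rank in $\OO_L^\times$. The only cosmetic difference is that you extract finite index from the kernel of $\OO_L^\times \to (\OO_L/f\OO_L)^\times$, whereas the paper notes that each $u \in \OO_L^\times$ has a power $u^t \in \OO^\times$; these are the same argument in slightly different packaging.
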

    \begin{proof} 
   An algebraic unit $u$ has minimal polynomial $m(x) = x^n + c_{n-1}x^{n-1} \dots + c_1x \pm 1$ with integer coefficients. Evaluating this polynomial at $u$, along with rearranging terms, shows that $u(u^{n-1} + c_{n-1}u^{n-2}+\dots + c_1) = \pm1$, which proves the first claim. The second claim follows immediately from the first.
   
By Dirichlet's unit theorem, $\OO_L^\times \cong \mu(L)\times \ZZ^{r + s - 1}$.     
Thus, we only need to check the rank of $\OO^\times$.
Because $\OO\subseteq \OO_L$ is a suborder, the index $N = [\OO_L : \OO] $ is finite and $N\OO_L \subseteq \OO$. 
If $u\in \OO_L^\times$ then $u$ reduces modulo $N$ to an element of $(\OO_L/N\OO_L)^\times$, 
and $u^{t} \equiv 1\bmod N\OO_L$ for some $1\leq t\leq \#(\OO_L/N\OO_L)^\times < N^{[L : \QQ]}$. 
We conclude $u^t \in \OO^\times$. 
Therefore, $\OO^\times$ has the same rank as $\OO_L^\times$ and we are done.
    \end{proof}
 
As an application, this shows the structure of the unit group of the ring $R_{m,L}$. 
\begin{prop} 
\label{prop:rank}
Let $m\geq 2$ be an integer.  If $L$ is a number field with $r$ real embeddings and $2s$ imaginary embeddings, then 
$$
	R_{m,L}^\times \cong \{\pm 1\} \times \ZZ^{r +s-1}.
$$
  In particular, the only roots of unity contained in $R_{m,L}$ are trivial.
 \end{prop}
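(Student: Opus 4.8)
The plan is to reduce the statement to a claim about roots of unity and then settle that claim with an archimedean norm estimate. First I would note that $R_{m,L}$ is an \emph{order} in $\OO_L$: by Lemma~\ref{lem:subring} it is a subring containing $m\OO_L$, so it is sandwiched between the full-rank lattices $m\OO_L$ and $\OO_L$ and thus has finite index in $\OO_L$. Hence Theorem~\ref{thm:gen_non-max_units}(c) applies and yields
$$
R_{m,L}^\times \cong \mu(R_{m,L}) \times \ZZ^{r+s-1}.
$$
Since $\pm 1 \in \ZZ \subseteq R_{m,L}$, everything reduces to proving that $\mu(R_{m,L}) = \{\pm 1\}$; the displayed isomorphism and the final sentence then follow immediately.

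To control $\mu(R_{m,L})$, I would use the description recorded in the proof of Lemma~\ref{lem:subring} that $R_{m,L}$ is the preimage of $\ZZ/m\ZZ$ under $\OO_L \to \OO_L/m\OO_L$; equivalently, $x \in R_{m,L}$ if and only if $x \equiv a \pmod{m\OO_L}$ for some $a \in \ZZ$. So let $\zeta \in R_{m,L}$ be a root of unity and choose an integer $a$ with $\zeta \equiv a \pmod{m\OO_L}$ and $|a| \le m/2$. Then $(\zeta - a)/m \in \OO_L$, so taking norms shows that $m^{[L:\QQ]}$ divides $N_{L/\QQ}(\zeta - a)$ in $\ZZ$. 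On the other hand, every embedding $\sigma\colon L \to \CC$ sends $\zeta$ to a complex number of absolute value $1$, so by the triangle inequality
$$
\bigl|N_{L/\QQ}(\zeta - a)\bigr| = \prod_{\sigma} |\sigma(\zeta) - a| \le (1 + |a|)^{[L:\QQ]} \le \Bigl(1 + \tfrac{m}{2}\Bigr)^{[L:\QQ]}.
$$

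These two facts are the crux. If $N_{L/\QQ}(\zeta - a) = 0$ then $\zeta = a$ is a rational integer, hence $\zeta = \pm 1$. Otherwise $|N_{L/\QQ}(\zeta - a)|$ is a nonzero multiple of $m^{[L:\QQ]}$, so the estimate gives $m^{[L:\QQ]} \le (1 + m/2)^{[L:\QQ]}$, i.e.\ $m \le 1 + m/2$, forcing $m \le 2$. Thus for every $m \ge 3$ this case is impossible and we conclude $\zeta \in \{\pm 1\}$ at once. The remaining and genuinely delicate case is $m = 2$, where the inequality $1 + m/2 \le m$ is an \emph{equality} and the norm bound is no longer strict. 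Here I would first rule out $a = 0$ (otherwise $\zeta \in 2\OO_L$ would force $2^{[L:\QQ]}$ to divide $|N_{L/\QQ}(\zeta)| = 1$), so $a = \pm 1$; equality in the estimate then requires $|\sigma(\zeta) - a| = 1 + |a| = 2$ together with $|\sigma(\zeta)| = 1$ for every $\sigma$, which holds only when $\sigma(\zeta) = -a$ for all $\sigma$. Hence $\zeta = -a = \mp 1$, and in all cases $\mu(R_{m,L}) = \{\pm 1\}$. The main obstacle is exactly this boundary case $m = 2$: for $m \ge 3$ the strict norm inequality closes the argument with no further work, but at $m = 2$ one must extract the extra rigidity from the equality case of the triangle inequality on each archimedean embedding.
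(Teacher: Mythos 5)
Your proof is correct, but it takes a genuinely different route from the paper's. Both arguments begin the same way: since $R_{m,L}$ contains $m\OO_L$ it is an order, so Theorem~\ref{thm:gen_non-max_units}(c) reduces everything to showing $\mu(R_{m,L}) = \{\pm 1\}$. From there the paper argues structurally: a nontrivial root of unity $\zeta \in R_{m,L}$ would give $\ZZ[\zeta] \subseteq R_{m,L} \cap \QQ(\zeta) = R_{m,\QQ(\zeta)}$ by Lemma~\ref{lem:subring}(a), but $\ZZ[\zeta]$ is the \emph{maximal} order of the cyclotomic field $\QQ(\zeta)$ while $R_{m,\QQ(\zeta)}$ is non-maximal by Lemma~\ref{lem:subring}(c) --- a contradiction, uniformly in $m \geq 2$, with no case analysis. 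Your argument replaces this with a direct archimedean estimate: writing $\zeta \equiv a \pmod{m\OO_L}$ with $|a| \le m/2$, comparing the divisibility $m^{[L:\QQ]} \mid N_{L/\QQ}(\zeta - a)$ against the triangle-inequality bound $|N_{L/\QQ}(\zeta-a)| \le (1+m/2)^{[L:\QQ]}$, and then extracting the rigidity of the equality case when $m=2$ (where $a = \pm 1$ and $\sigma(\zeta) = -a$ for every embedding). Each approach buys something: the paper's proof is shorter and avoids any case split, but it leans on the nontrivial classical fact that $\ZZ[\zeta_n]$ is the full ring of integers of $\QQ(\zeta_n)$; yours is more elementary and self-contained, using only the congruence description of $R_{m,L}$, norm divisibility, and the triangle inequality, and it makes transparent exactly why $m=2$ is the borderline case (this is the same value of $m$ that forces the squaring in Corollary~\ref{cor:inside}(a)). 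Both proofs are complete; your handling of the boundary case --- ruling out $a=0$ by norm divisibility and then forcing $\sigma(\zeta) = -a$ at every place --- is airtight.
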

  \begin{proof} 
By Theorem \ref{thm:gen_non-max_units}, we only need to show that $R_{m,L}$ has no roots of unity other than $\pm1$. If $\zeta \in R_{m,L}^\times$ is a nontrivial root of unity, then $L$ contains the nontrivial cyclotomic subfield $L_0 = \QQ(\zeta) \supsetneq \QQ$, and thus  $R_{m,L_0} = R_{m,L}\cap L_0\supseteq \ZZ[\zeta]$ by Lemma \ref{lem:subring}. But $\ZZ[\zeta]$ is the maximal order of $L_0$, while $R_{m,L_0}$ is a non-maximal order, which is a contradiction.
  \end{proof}

  \subsection{Totally imaginary extensions of totally real fields}
  
  We now restrict our attention to the main focus of this paper: totally imaginary quadratic extensions of totally real fields.
The following is a generalization of \cite[Theorem 4.12]{Washington} to the case of non-maximal orders. Given a field $L$ and a subring $\OO\subseteq L$, we write $\mu(\OO)$ for the set of roots of unity in $\OO$.

  \begin{theorem} 
\label{thm:inside}
Let $K$ be a totally real field, let $L$ be a totally imaginary quadratic extension, and let $\OO\subseteq L$ be an order which is stable under complex conjugation. If $u\in \OO^\times$, then its complex conjugate is $\overline u = \zeta u$ where $\zeta\in \mu(\OO)$.  In particular,  writing $\OO_{\tr} = \OO\cap K$,
$$
	[\OO^\times : \mu(\OO) \cdot \OO_{\tr}^\times] \leq 2.
$$
 \end{theorem}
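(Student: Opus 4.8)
The plan is to study the element $w = \overline u/u$ for a unit $u \in \OO^\times$, show first that it is a root of unity lying in $\OO$, and then package this into the index bound via a group homomorphism.

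First I would exploit that $L$ is a CM field: since $K$ is totally real and $L/K$ is quadratic and totally imaginary, every embedding $\sigma\colon L \hookrightarrow \CC$ satisfies $\sigma(\overline x) = \overline{\sigma(x)}$, where the bar on the right is complex conjugation on $\CC$. Indeed, the restriction of $\sigma$ to $K$ is real, so the composite of $\sigma$ with conjugation on $\CC$ fixes $K$ and cannot equal $\sigma$, lest $\sigma(L)\subseteq\RR$. Consequently, for $u \in \OO^\times$ the ratio $w = \overline u/u$ satisfies $|\sigma(w)| = 1$ for every $\sigma$. Next I would check that $w \in \OO^\times$: the order $\OO$ is stable under complex conjugation, so $\overline u \in \OO^\times$, and by Theorem~\ref{thm:gen_non-max_units}(a) we have $u^{-1} \in \ZZ[u] \subseteq \OO$, whence $w \in \OO^\times$. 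Since $w$ is an algebraic integer all of whose archimedean absolute values equal $1$, Kronecker's theorem forces $w$ to be a root of unity; as $w \in \OO$, in fact $w \in \mu(\OO)$. This proves the first assertion $\overline u = \zeta u$ with $\zeta \in \mu(\OO)$.

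For the index bound I would consider the map $\phi\colon \OO^\times \to \mu(\OO)$, $u \mapsto \overline u/u$, which the first assertion shows is a well-defined group homomorphism with kernel exactly $\OO_{\tr}^\times$, the units fixed by conjugation. Composing with the projection $\mu(\OO) \to \mu(\OO)/\mu(\OO)^2$ yields $\psi\colon \OO^\times \to \mu(\OO)/\mu(\OO)^2$. Because $\phi(\zeta) = \overline\zeta/\zeta = \zeta^{-2}$ for $\zeta \in \mu(\OO)$, both $\mu(\OO)$ and $\OO_{\tr}^\times$ lie in $\ker\psi$. The crux is the reverse inclusion: if $\psi(u) = 0$, then $\overline u/u = \zeta^2$ for some $\zeta \in \mu(\OO)$, and setting $v = u\zeta$ gives $\overline v/v = (\overline u/u)\zeta^{-2} = 1$, so $v \in \OO_{\tr}^\times$ and $u = v\zeta^{-1} \in \mu(\OO)\,\OO_{\tr}^\times$. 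Hence $\ker\psi = \mu(\OO)\,\OO_{\tr}^\times$, so $\OO^\times/\mu(\OO)\,\OO_{\tr}^\times$ embeds into $\mu(\OO)/\mu(\OO)^2$. Since $\mu(\OO)$ is finite cyclic, this last group has order at most $2$, giving the claimed bound.

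The main obstacle is the first step: confirming that $w = \overline u/u$ is genuinely a root of unity contained in the \emph{possibly non-maximal} order $\OO$, rather than merely an element of $L$ with $|\sigma(w)| = 1$. The CM property supplies the absolute-value computation and Kronecker's theorem does the rest, but one must check that the standard argument survives the passage to a non-maximal order; this is exactly where Theorem~\ref{thm:gen_non-max_units}(a), giving $u^{-1} \in \ZZ[u]$, and the stability of $\OO$ under conjugation are essential, since otherwise $w$ need not lie in $\OO$ and the relevant roots of unity would be those of $\OO_L$ rather than of $\OO$.
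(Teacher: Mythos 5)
Your proposal is correct and follows essentially the same route as the paper's proof: show $\overline u/u$ is an algebraic integer of absolute value $1$ at every embedding (via the CM property), invoke Kronecker to conclude it lies in $\mu(\OO)$, then compute the kernel of the induced map $\OO^\times \to \mu(\OO)/\mu(\OO)^2$ to be $\mu(\OO)\cdot\OO_{\tr}^\times$. The only difference is cosmetic (you work with $\overline u/u$ rather than $u/\overline u$) plus a welcome extra bit of care: you explicitly justify, via Theorem~\ref{thm:gen_non-max_units}, that the ratio lies in the possibly non-maximal order $\OO$, a point the paper passes over when writing $\mu(L)\cap\OO = \mu(\OO)$.
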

  \begin{proof} 
Notice that every conjugate of $u/\overline u$ has absolute value 1 because $\sigma(\overline u) = \overline\sigma(u)$ for every embedding $\sigma: L\hookrightarrow \CC$; see  \cite[p.39]{Washington}.
 Therefore, $\frac{u}{\overline u}$ is a root of unity \cite[Lemma 1.6]{Washington}.
 In other words, $\frac{u}{\overline u}\in \mu(L)\cap \OO = \mu(\OO)$. 
 
 Thus, we define a group homomorphism $\varphi: \OO^\times \to \mu(\OO)/\mu(\OO)^2$ induced by $u \mapsto u/\overline u$ and compute its kernel. 
 First, we check that if $u = \zeta u_1$ for $\zeta \in \mu(\OO)$ and $u_1\in \OO^\times \cap K$, then $\varphi(u) = u/\overline u = \zeta u_1/\overline{\zeta u_1} = \zeta^2$.
 Conversely, if $u\in \OO^\times$ and $\varphi(u) = u/\overline u=  \zeta^2$, then we rearrange to see $\overline \zeta u = \zeta \overline u\in \OO_{\tr}$ is totally real.
  Thus, $\ker \varphi =  \mu(\OO) \cdot \OO_{\tr}^\times$. Since we know the index $[\mu(\OO) : \mu(\OO)^2] \leq 2$, the proof is done.
  \end{proof}

We now wish to apply this theorem to the non-maximal subrings defined in the previous section. In our context, this theorem is helpful because we define subrings with no nontrivial roots of unity. Before we move on, we note that the application of Theorem \ref{thm:inside} in the case when $\OO = \OO_L$ is a central ingredient in \cite{mruv, Springer20}. The following corollary was  also inspired by an analogous fact for maximal orders \cite[Proof of Proposition 1.5]{Washington}.

  \begin{remark} 
   It is easy to show, e.g., using Magma \cite{Magma}, that it is possible to have $R_{2,L}^\times\not\subseteq K$ in Corollary \ref{cor:inside}. Indeed, this occurs for the cyclotomic field $L= \QQ(\zeta_{15})$. Thus, squaring the group of units is necessary in the statement of the corollary when $m = 2$.
   \end{remark}

\begin{corollary} 
\label{cor:inside}
Let $K$ be a totally real field with totally imaginary quadratic extension $L$.
\begin{enumerate}[(a)]
	\item $(R_{2,L}^\times)^2\subseteq R_{2,K}^\times$
	\item If $m \geq 3$, then $R_{m,L}^\times=R_{m,K}^\times$.
\end{enumerate}
 \end{corollary}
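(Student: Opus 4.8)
The plan is to apply Theorem \ref{thm:inside} to the order $\OO = R_{m,L}$ and then analyze the two possibilities for the conjugate of a unit. First I would check that $R_{m,L}$ is stable under complex conjugation: as recorded in the proof of Lemma \ref{lem:subring}, the set $R_{m,L}$ is the preimage of $\ZZ/m\ZZ$ under the reduction map $\OO_L \to \OO_L/m\OO_L$, and complex conjugation is a ring automorphism of $\OO_L$ which fixes $\ZZ$ and preserves $m\OO_L$, so it carries $R_{m,L}$ into itself. With stability established, Theorem \ref{thm:inside} applies. By Proposition \ref{prop:rank} we have $\mu(R_{m,L}) = \{\pm 1\}$, and by Lemma \ref{lem:subring}(a) the totally real part is $R_{m,L}\cap K = R_{m,K}$. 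Since $\pm 1 \in R_{m,K}^\times$, the theorem yields $[R_{m,L}^\times : R_{m,K}^\times]\leq 2$, where the inclusion $R_{m,K}^\times \subseteq R_{m,L}^\times$ is immediate from Theorem \ref{thm:gen_non-max_units}.

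Part (a) is then formal: $R_{2,L}^\times/R_{2,K}^\times$ is an abelian group of order at most $2$, so every element is killed by squaring, giving $(R_{2,L}^\times)^2\subseteq R_{2,K}^\times$.

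For part (b) I would rule out the index-$2$ case when $m\geq 3$, which is the crux of the argument. Suppose $u\in R_{m,L}^\times\setminus R_{m,K}^\times$. By Theorem \ref{thm:inside} we have $\overline u = \zeta u$ with $\zeta\in\{\pm 1\}$; the case $\zeta = 1$ forces $u$ into the fixed field $K$, hence into $R_{m,L}\cap K = R_{m,K}$ (and $u^{-1}$ likewise), contradicting $u\notin R_{m,K}^\times$, so we must have $\overline u = -u$. Now I would exploit the congruence structure of $R_{m,L}$: write $u\equiv n \pmod{m\OO_L}$ with $n\in\ZZ$. Conjugation gives $\overline u \equiv n$, while $\overline u = -u \equiv -n$, so $2n\in m\OO_L\cap\ZZ = m\ZZ$, i.e. $m\mid 2n$. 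On the other hand $u^{-1}\in R_{m,L}$ reduces to some integer $n'$, and the relation $uu^{-1}=1$ reduces to $nn'\equiv 1 \pmod{m\OO_L}$; intersecting with $\ZZ$ gives $nn'\equiv 1\pmod m$, so $\gcd(n,m)=1$. Together $m\mid 2n$ and $\gcd(n,m)=1$ force $m\mid 2$, contradicting $m\geq 3$. Hence the index is $1$ and $R_{m,L}^\times = R_{m,K}^\times$.

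The main obstacle is precisely this last step. Theorem \ref{thm:inside} only bounds the index by $2$, and the real content of part (b) is showing that a putative extra unit with $\overline u = -u$ cannot survive the congruence constraint imposed by membership in the non-maximal ring $R_{m,L}$ once $m\geq 3$. Notice that $m=2$ is exactly where $m\mid 2n$ becomes vacuous, so the contradiction disappears; this explains why only the weaker squared statement is available there, consistent with the $\QQ(\zeta_{15})$ example in the preceding remark.
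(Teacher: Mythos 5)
Your proof is correct and takes essentially the same approach as the paper's: both combine Theorem \ref{thm:inside} with Proposition \ref{prop:rank} to reduce to $\overline u = \pm u$, and both kill the case $\overline u = -u$ for $m \geq 3$ by exploiting the congruence $u \equiv n \pmod{m\OO_L}$ together with the fact that $u$ is a unit. The only cosmetic difference is the final step, where the paper divides $2u = u - \overline u \in m\OO_L$ by the unit $u$ to conclude $m \mid 2$ directly, while you pass to integer residues and use $\gcd(n,m)=1$; this is the same argument in a slightly different dress.
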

  \begin{proof} 
Let $u\in R_{m,L}^\times$.  Clearly $R_{m,L}$ is stable under complex conjugation by definition. Combining Proposition \ref{prop:rank} and Theorem \ref{thm:inside}, either $\overline u = u$ or $\overline u = -u$. 
To finish the proof, assume $m\geq 3$ and suppose that $\overline u = -u$. Writing $u = mb+ j$ for $b\in \OO_L$ and $j\in \ZZ$ we have $\overline u = m\overline b + j$.  Therefore, 
$$
	2u = u - \overline u = mb - m\overline b\in m\OO_L.
$$
However, this implies that $m$ divides $2$ because $u$ is a unit, and this is impossible because $m \geq 3$.
We conclude that $\overline u = u$ is totally real.
  \end{proof}

  %%%%%%%%%
%Defining the totally real subring
%%%%%%%%%
  \section{Defining the totally real subring.}
  
  We now show how to deploy the unit groups described in the previous section. To do this, we explicitly write certain totally real algebraic integers as the sum of units.
  
     \begin{lemma} 
       \label{lem:squares}
     Let $K_0\subseteq K_1$ be totally real fields. If $a = 2(2d + 1)$ for some $d\in \OO_{K_0}$ such that 
     $\sqrt{d^2 + d}\in K_1$, then there is a unit $u\in R_{2,K_1}^\times$ such that
       $
       	a = u + 1/u.
       $
       In particular, 
       $$
       	a^2 = u^2 +1/u^2 +2.
       $$
        \end{lemma}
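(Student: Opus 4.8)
The plan is to solve the quadratic $u + 1/u = a$ explicitly and then verify that the resulting root lies in the non-maximal order $R_{2,K_1}$ and is a unit there. Rewriting $u + 1/u = a$ as $u^2 - au + 1 = 0$, the two roots are $\tfrac{1}{2}(a \pm \sqrt{a^2-4})$, and they are mutually inverse since their product equals the constant term $1$. Using $a = 2(2d+1) = 4d+2$ I would compute $a^2 - 4 = 16(d^2+d)$, so that $\sqrt{a^2-4} = 4\sqrt{d^2+d}$ lies in $K_1$ by hypothesis. This produces the explicit candidate
$$
u = 2d + 1 + 2\sqrt{d^2+d}, \qquad 1/u = 2d+1 - 2\sqrt{d^2+d},
$$
and a direct multiplication confirms $u \cdot (1/u) = (2d+1)^2 - 4(d^2+d) = 1$, while $u + 1/u = 2(2d+1) = a$ as required.

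Next I would check integrality. Since $d \in \OO_{K_0}$, the element $d^2+d$ is an algebraic integer, so $\sqrt{d^2+d}$ is a root of the monic polynomial $x^2 - (d^2+d)$ and hence is an algebraic integer lying in $K_1$, i.e.\ in $\OO_{K_1}$. Consequently both $u$ and $1/u$ lie in $\OO_{K_1}$, and since their product is $1$ we conclude $u \in \OO_{K_1}^\times$.

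The key remaining point --- and the one place where the precise shape $a = 2(2d+1)$ is used --- is to show $u$ lies in the non-maximal order $R_{2,K_1}$, not merely in $\OO_{K_1}$. Writing $u = 1 + 2\bigl(d + \sqrt{d^2+d}\bigr)$ with $d + \sqrt{d^2+d}\in \OO_{K_1}$ shows that $u$ reduces to $1 \in \ZZ/2\ZZ$ under the natural map $\OO_{K_1} \to \OO_{K_1}/2\OO_{K_1}$, so $u \in R_{2,K_1}$ by the description of $R_{2,K_1}$ as the preimage of $\ZZ/2\ZZ$ established in Lemma \ref{lem:subring}. Combining $u \in R_{2,K_1}$ with $u \in \OO_{K_1}^\times$, Theorem \ref{thm:gen_non-max_units}(b) yields $u \in \OO_{K_1}^\times \cap R_{2,K_1} = R_{2,K_1}^\times$, as desired. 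Finally, squaring the identity $a = u + 1/u$ immediately gives $a^2 = u^2 + 1/u^2 + 2$.

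I do not anticipate a serious obstacle: the argument is essentially a verification once the correct root is written down. The only subtlety worth flagging is that the doubling in $a = 2(2d+1)$ does double duty --- it makes $a^2 - 4 = 16(d^2+d)$ a square multiple of the quantity whose root is assumed to lie in $K_1$, and it simultaneously forces $u \equiv 1 \pmod{2\OO_{K_1}}$ so that $u$ lands in the non-maximal order $R_{2,K_1}$ rather than only in $\OO_{K_1}$.
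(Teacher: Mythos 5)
Your proposal is correct and matches the paper's proof in essence: both arrive at the same explicit unit $u = 2d+1+2\sqrt{d^2+d}$ with inverse $2d+1-2\sqrt{d^2+d}$ and verify $u\cdot(1/u)=1$ and $u+1/u=a$ by the same computation. The only differences are cosmetic --- you motivate the candidate via the quadratic formula and spell out the integrality of $\sqrt{d^2+d}$ and the membership $u\in R_{2,K_1}$ (via Lemma \ref{lem:subring} and Theorem \ref{thm:gen_non-max_units}(b)), steps the paper treats as immediate from writing $u = 2b_1+1$ with $b_1 = d+\sqrt{d^2+d}\in\OO_{K_1}$.
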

         \begin{proof} 
         Define $u_j = 2b_j + 1\in R_{2,K_1}$ where
       $
       	b_1 = d + \sqrt{d^2 + d}
       $
       and       $
       	b_2 = d - \sqrt{d^2 + d}
       $.       
       It is easy to compute directly that $u_1$ and $u_2$ are inverses of each other, and hence units from $R_{2,K_1}$:
       \begin{align*}
       	u_1u_2 
		= (2b_1 + 1)(2b_2 + 1) 
		&= (2d +1 + 2\sqrt{d^2 + d})(2d+1 -2 \sqrt{d^2 + d })\\
		&=(2d + 1)^2 - 4(d^2 + d)
		=1.
       \end{align*} 
       Moreover,
       $
	u_1 +u_2 = 2(b_1 + b_2+1) = 2(2d + 1) = a.
       $
       We take $u = u_1$, in which case $1/u = u_2$. It follows immediately that
       $
       	a^2 = (u + 1/u)^2 =  u^2 + 1/u^2 + 2.
       $
         \end{proof}

      \begin{lemma} 
      \label{lem:all_32}
     Let $K_0$ be a totally real field.  If $d\in \OO_{K_0}$ and all conjugates of $d$ are outside the open interval $(-1,1)\subseteq \RR$, then 
       $$
       	32d =   u^2 + 1/u^2 - v^2 - 1/v^2
	$$
	for units $u, v\in R_{2,{K_1}}^\times$ where $K_1 = K_0(\sqrt{d^2 + d}, \sqrt{(d-1)^2 + (d-1)})$ is totally real.
       \end{lemma}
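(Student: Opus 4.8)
The plan is to apply Lemma~\ref{lem:squares} twice — once to produce $u$ and once to produce $v$ — and then subtract the two resulting expressions so that the quadratic and constant terms cancel, leaving only a term linear in $d$. First I would feed the element $d$ itself into Lemma~\ref{lem:squares}: since $\sqrt{d^2+d}\in K_1$ by the very definition of $K_1$, there is a unit $u\in R_{2,K_1}^\times$ with $u + 1/u = 2(2d+1)$, and hence
\[
	u^2 + 1/u^2 = \bigl(2(2d+1)\bigr)^2 - 2 = 16d^2 + 16d + 2.
\]
Next I would feed the element $d-1\in \OO_{K_0}$ into the same lemma: since $\sqrt{(d-1)^2+(d-1)}\in K_1$, again by definition, there is a unit $v\in R_{2,K_1}^\times$ with $v + 1/v = 2\bigl(2(d-1)+1\bigr)$, so that
\[
	v^2 + 1/v^2 = \bigl(2(2(d-1)+1)\bigr)^2 - 2 = 16d^2 - 16d + 2.
\]
Subtracting the two displays cancels the $16d^2$ and the constant, leaving $u^2 + 1/u^2 - v^2 - 1/v^2 = 32d$, which is exactly the claimed identity. (The same cancellation results from feeding $-d$ in place of $d-1$, since $(-d)^2 + (-d) = (d-1)^2 + (d-1)$; either choice invokes only the two radicals already adjoined to form $K_1$.)

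The one hypothesis of Lemma~\ref{lem:squares} that still needs justifying — and the step I expect to be the crux — is that $K_1$ is totally real, as this is both an input to Lemma~\ref{lem:squares} and what guarantees $u,v$ are honest elements of the totally real ring $R_{2,K_1}$. This is where the assumption on the conjugates of $d$ must be used. For $K_1$ to be totally real it is necessary and sufficient that each adjoined radicand be totally nonnegative, i.e.\ that $\sigma(d)^2 + \sigma(d)\geq 0$ and $\sigma(d)^2 - \sigma(d)\geq 0$ for every real embedding $\sigma\colon K_0\hookrightarrow\RR$. I would verify this place by place: since $\sigma(d)$ is real, these two inequalities say precisely that $\sigma(d)$ avoids the open interval $(-1,0)$, where $x(x+1)$ is negative, and the open interval $(0,1)$, where $x(x-1)$ is negative. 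The interval hypothesis placed on the conjugates of $d$ is exactly what is meant to exclude both danger zones simultaneously at every real place, and once this is confirmed, $K_0\subseteq K_1$ is a pair of totally real fields and Lemma~\ref{lem:squares} applies verbatim to both $d$ and $d-1$.

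The main obstacle is therefore not the algebra, which reduces to the one-line cancellation above, but the careful bookkeeping of total reality: one must check that a single interval restriction on the conjugates of $d$ controls \emph{both} radicands $d^2+d$ and $(d-1)^2+(d-1)$ at every embedding of $K_0$, since a single conjugate making either quantity negative would already destroy the total reality of $K_1$ and invalidate the two applications of Lemma~\ref{lem:squares}.
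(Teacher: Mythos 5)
Your algebra is correct and follows the same route as the paper's own proof: apply Lemma~\ref{lem:squares} once to $d$ and once to $d-1$, then subtract so the quadratic and constant terms cancel. But the step you yourself flagged as the crux --- verifying that the single interval hypothesis on $d$ keeps \emph{both} radicands totally nonnegative --- is exactly where the argument breaks, and your assertion that ``the interval hypothesis placed on the conjugates of $d$ is exactly what is meant to exclude both danger zones'' is false. As you correctly computed, total reality of $K_1$ requires every conjugate of $d$ to avoid $(-1,0)$ \emph{and} $(0,1)$; the stated hypothesis only forbids the open interval $(-2,0)$, which contains the first danger zone but is disjoint from the second. Concretely, take $K_0 = \QQ(\sqrt 5)$ and $d = (3-\sqrt5)/2$, a root of $x^2-3x+1$, whose conjugates are approximately $0.38$ and $2.62$, both outside $(-2,0)$. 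Then $(d-1)^2+(d-1) = d(d-1)$ has a conjugate approximately equal to $-0.24$, so $\sqrt{(d-1)^2+(d-1)}$ is not totally real and $K_1$ fails to be totally real: the conclusion of the lemma is simply false for this $d$. No amount of care can close this gap, because the statement as written needs a stronger hypothesis.

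You should know that the paper's own proof commits the identical error: it notes that $x^2+x<0$ exactly on $(-1,0)$ and then claims that conjugates outside $(-2,0)$ force both $d^2+d$ and $(d-1)^2+(d-1)$ to be totally nonnegative, silently ignoring that the second radicand is negative when a conjugate of $d$ lies in $(0,1)$. So in reproducing the argument you have in effect exposed a bug in the paper; your write-up is actually closer to catching it, since you made both danger zones explicit before waving the second one away. The repair is to assume all conjugates of $d$ lie outside $(-1,1)$ (or outside $(-1,0)\cup(0,1)$, which also admits $d=0$); with that hypothesis your proof and the paper's go through verbatim. The downstream application in Theorem~\ref{thm:def_tr_loose} then needs a matching patch, since $(d\pm1)^2$ is only totally nonnegative and may have conjugates in $(0,1)$: one can instead use $(d\pm1)^2+1$, which are totally $\geq 1$, and write $4d = \bigl((d+1)^2+1\bigr) - \bigl((d-1)^2+1\bigr)$, after which the rest of the paper's argument is unaffected.
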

        \begin{proof} 
        It is easy to see that the values of the function $f(x) = x^2 + x$ are negative precisely when $x\in (-1,0)$, so the element $d^2 + d$ is totally nonnegative if and only if all conjugates of $d$ lie outside of $(-1,0)$.
        Similarly, $(d-1)^2 + (d-1)$ is totally nonnegative if and only if all conjugates of $d$ lie outside of $(0,1)$.
        Therefore, if the conjugates of $d$ lie outside of $(-1,1)$, then elements $d^2 + d$ and $(d-1)^2 + (d-1)$ are both totally nonnegative, so their square roots $\sqrt{d^2 + d}$ and $ \sqrt{(d-1)^2 + (d-1)}$ are totally real.
        Thus, applying Lemma \ref{lem:squares} twice,
        $$
        32d = 4(2d+1)^2 - 4(2(d-1) + 1)^2 = u^2 + 1/u^2 - v^2 - 1/v^2.
        $$
        for some $u,v\in R_{2,K_1}^\times$.
        \end{proof}

Now we are ready to define the desired totally real subsets $X$ within the totally imaginary fields.  This can be seen as a generalization of \cite[Lemma 7]{mruv} and \cite[Lemma 2.7]{Springer20}.
 \begin{theorem} 
 \label{thm:def_tr_loose}
Let $K_0$ be a totally real field and let $K_1$ be the maximal totally real subfield of $K_0^{(2)}$.   If $K\supseteq K_1$ is a totally real field, then for every totally imaginary quadratic extension $L$ of $K$, there is an existentially definable subset $X\subseteq \OO_L$ satisfying
$$
	\OO_{K_0}\subseteq X \subseteq \OO_K.	
$$
  \end{theorem}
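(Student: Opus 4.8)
The plan is to take for $X$ the set of elements of $\OO_L$ expressible as a difference of two expressions of the shape produced by Lemma \ref{lem:all_32}. Concretely, I would set
$$
X = \left\{ x\in\OO_L : \exists\, u,v,w,z\in R_{2,L}^\times,\ 32x = (u^2 + u^{-2} - v^2 - v^{-2}) - (w^2 + w^{-2} - z^2 - z^{-2}) \right\}.
$$
First I would check that $X$ is existentially definable in $\OO_L$. Membership $u\in R_{2,L}$ is given by the positive existential formula $\varphi_2$ of Lemma \ref{lem:subring}, and by Theorem \ref{thm:gen_non-max_units} an element $u$ lies in $R_{2,L}^\times$ exactly when $u\in R_{2,L}$ and there is $u'\in R_{2,L}$ with $uu'=1$ (indeed $u^{-1}\in\ZZ[u]\subseteq R_{2,L}$ automatically). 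Introducing such inverse variables for $u^{-1},v^{-1},w^{-1},z^{-1}$ turns the displayed condition into a single positive existential formula in $x$, so $X$ is existentially definable.

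Next I would prove the inclusion $X\subseteq\OO_K$. For $x\in X$, Corollary \ref{cor:inside}(a) gives $(R_{2,L}^\times)^2\subseteq R_{2,K}^\times\subseteq K$, so each of $u^2, v^2, w^2, z^2$ lies in $K$; since inverses of units in $R_{2,L}$ again lie in $R_{2,L}^\times$, the same applies to $u^{-2}, v^{-2}, w^{-2}, z^{-2}$. Hence $32x\in K$, so $x\in K$, and because $x\in\OO_L$ this yields $x\in\OO_L\cap K = \OO_K$.

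For the reverse inclusion $\OO_{K_0}\subseteq X$, the idea is to reduce an arbitrary $d\in\OO_{K_0}$ to the situation of Lemma \ref{lem:all_32} by a translation. Given $d$, I would choose $n\in\NN$ large enough that every conjugate of $d+n$ is positive; then both $d+n$ and the integer $n$ have all conjugates outside $(-2,0)$, and both lie in $\OO_{K_0}$. Applying Lemma \ref{lem:all_32} to $d+n$ and to $n$ produces units realizing $32(d+n)$ and $32n$ in the required four-term form. The square roots appearing in these representations lie in $K_0^{(2)}$ and are totally real (their radicands being totally nonnegative), hence lie in $K_1$, the maximal totally real subfield of $K_0^{(2)}$; since $K_1\subseteq K\subseteq L$ and $R_{2,K_1} = R_{2,L}\cap K_1$ by Lemma \ref{lem:subring}(a), all four units belong to $R_{2,L}^\times$. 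Subtracting gives $32d = 32(d+n) - 32n$ in exactly the form defining $X$, so $d\in X$.

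I expect the translation step to be the main point requiring care: Lemma \ref{lem:all_32} only handles $d$ whose conjugates avoid $(-2,0)$, so writing a general $d$ as a difference $(d+n)-n$ is what forces the four-unit formula rather than a two-unit one, and one must confirm that the shift keeps all auxiliary square roots totally real and inside $K_1\subseteq K$ so that the resulting units remain in $R_{2,L}^\times$. By contrast, the inclusion $X\subseteq\OO_K$ and the verification that the defining formula is genuinely existential are comparatively routine given Corollary \ref{cor:inside}(a) and Theorem \ref{thm:gen_non-max_units}.
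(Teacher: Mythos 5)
Your proposal is correct, and it is essentially the paper's argument: the same ingredients (existential definability of $R_{2,L}$ via Lemma \ref{lem:subring}, Corollary \ref{cor:inside}(a) for the containment $X\subseteq \OO_K$, and Lemma \ref{lem:all_32} applied over subfields of $K_1$ for the containment $\OO_{K_0}\subseteq X$) are assembled in the same order, and your $X$ is, up to constants, the same set of elements $x$ with $32x$ lying in a difference of two ``unit-square'' expressions. The only genuine difference is the final reduction step handling arbitrary $d\in\OO_{K_0}$: the paper uses the uniform identity $4d = (d+1)^2 - (d-1)^2$, exploiting that the squares $(d\pm 1)^2$ are automatically totally nonnegative and hence have all conjugates outside $(-2,0)$; you instead write $32d = 32(d+n) - 32n$ for a sufficiently large positive integer $n$ depending on $d$. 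Your version is equally valid, since the dependence on $d$ sits only in the choice of existential witnesses and not in the defining formula; the one point you should state explicitly is why a suitable $n$ exists, namely that $d$ is algebraic of finite degree over $\QQ$, so it has only finitely many conjugates and they are bounded. What the paper's identity buys is uniformity (no element-dependent choice and no appeal to boundedness of conjugates); what yours buys is a slightly more direct formula, with the single constant $32$ in place of the paper's two-layer condition $4\alpha = x_1 - x_2$ with $32x_1, 32x_2 \in X_0$.
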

   \begin{proof} 
  We write
  $$
  	X_0 = \{u_1^2 + u_2^2 - u_3^2 - u_4^2 : u_1, \dots, u_4 \in R_{2,L}^\times\}.
  $$
  We have $X_0 \subseteq \OO_K$ by Corollary \ref{cor:inside}, and the set
  $$
  	X_1 = \{d \in \OO_L : 32d\in X_0\}.
  $$
 contains all elements $d\in \OO_{K_0}$ whose conjugates are all outside $(-1,1)$ by Lemma \ref{lem:all_32}.  
 We repeat the same trick from before to finish the proof: If $d\in \OO_{K_0}$, then $(d-1)^2$ and $(d+1)^2$ are totally nonnegative. 
 In particular, the elements $(d-1)^2 + 1$ and $(d+1)^2 +1$ are contained in $X_1$ because their conjugates lie inside $[1,\infty)$, and outside $(-1,1)$.
 Thus, we may write $4d$ as the difference of these two elements of $X_1$, namely
 $
 	 4d = (d+1)^2+1 - [(d-1)^2+1].
 $
 
In summary,
$
	\OO_{K_0} 
		\subseteq X
		\subseteq \OO_K,
$
where
$$
	X =  \{\alpha \in \OO_L : \exists x_1,x_2\in X_1, \  4\alpha = x_1 - x_2 \}.
$$
Because the ring $R_{2,L}$ is existentially definable in $\OO_L$, this completes the proof.
   \end{proof}
   
   \begin{remark} 
The definition of the subset $X$ in Theorem~\ref{thm:def_tr_loose} is uniform in the sense that the formula defining $X$ does not depend on the choices of $K_0, K_1, K$, or $L$. 
    \end{remark}

By applying the previous theorem in the case where $K_ 0 = K$, we find that the ring of integers $\OO_K$ itself is existentially definable in $\OO_L$, as written in the following theorem. Recall that $K$ is \emph{closed under square roots} if $\alpha\in K$ whenever $\alpha\in \Qbar$ is totally real and $\alpha^2 \in K$.
   
   \begin{theorem} 
   \label{thm:def_tr}
    If $K$ is a totally real field which is closed under square roots and $L$ is any quadratic totally imaginary extension of $K$, then $\OO_K$ is existentially definable in $\OO_L$.
    \end{theorem}

     Since $\Qtr$ is closed under square roots, we obtain the definability portion of Theorem \ref{thm:intro_motivation}.
     \begin{corollary} 
     \label{cor:Ztr}
     $\Ztr$ is existentially definable in the ring of integers of $\Qtr(i)$.
      \end{corollary}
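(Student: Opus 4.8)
The plan is to obtain this corollary as an immediate application of Theorem~\ref{thm:def_tr}, specialized to $K = \Qtr$ and $L = \Qtr(i)$. First I would confirm that the hypotheses of that theorem hold in this instance: that $\Qtr$ is a totally real field closed under square roots, and that $\Qtr(i)$ is a totally imaginary quadratic extension of $\Qtr$.

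The verification that $\Qtr$ is closed under square roots is essentially definitional. Since $\Qtr$ is by definition the field of \emph{all} totally real algebraic numbers, any totally real $\alpha \in \Qbar$ already lies in $\Qtr$; hence a fortiori $\alpha \in \Qtr$ whenever $\alpha$ is totally real with $\alpha^2 \in \Qtr$, which is exactly the requisite condition. Next I would check that $L = \Qtr(i)$ is a quadratic extension of $\Qtr$: since $i$ is not real it cannot lie in $\Qtr$, so $[\Qtr(i) : \Qtr] = 2$. Finally, $\Qtr(i)$ is totally imaginary because every embedding $\Qtr(i) \hookrightarrow \CC$ sends $i$ to a root of $x^2 + 1$, that is, to $\pm i \notin \RR$, so no embedding has image contained in $\RR$.

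With these checks in hand, Theorem~\ref{thm:def_tr} applies directly and yields that $\OO_{\Qtr} = \Ztr$ is existentially definable in $\OO_{\Qtr(i)}$, the ring of integers of $\Qtr(i)$. Because the statement is a direct specialization of an already-established theorem, there is no genuine obstacle to overcome at this stage; the only point deserving a moment's care is recognizing that the \emph{closed under square roots} hypothesis is automatic for $\Qtr$ precisely because $\Qtr$ contains every totally real algebraic number. All of the substantive work has already been carried out upstream, in Theorem~\ref{thm:def_tr_loose} and Corollary~\ref{cor:inside}, and consolidated in Theorem~\ref{thm:def_tr}.
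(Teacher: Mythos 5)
Your proof is correct and follows the same route as the paper: the paper likewise obtains the corollary as an immediate specialization of Theorem~\ref{thm:def_tr} to $K = \Qtr$, $L = \Qtr(i)$, noting only that $\Qtr$ is closed under square roots (which, as you observe, is automatic since $\Qtr$ contains every totally real algebraic number). Your additional verifications that $\Qtr(i)$ is a totally imaginary quadratic extension of $\Qtr$ are routine but harmless elaborations of what the paper leaves implicit.
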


   Throughout this section, we have exploited the fact that, within the fields $K$ that we consider, many algebraic integers can be written as the sum of a bounded number of units.
      For contrast, note that Frey and Jarden showed the following when $F$ is a number field: There is no integer $n\geq 1$ such that every element of $\OO_F$ is the sum of at most $n$ units in $\OO_F^\times$ \cite[Theorem~1]{JN07}. In the same paper, they construct many infinite algebraic extensions of $\QQ$ in which every element of the ring of integers is the sum of at most 2 units; see \cite[Theorem~8]{JN07}. Although the fields they consider are not used in this paper, their complementary results illustrate how we are  exploiting a phenomenon that exists only for (some) infinite algebraic extensions of $\QQ$, and not for number fields.

 %%%%%%%%%
%TOT REAL SUBRING
%%%%%%%%%
 \section{Undecidability}
 \label{sec:undec}
 
The definability results in the preceding section have immediate consequences for decidability. Namely, in the context of Theorem \ref{thm:def_tr}, if the existential or first-order theory of $\OO_K$ is undecidable then the same is true for $\OO_L$. However, this is not the end of the story because the implications for decidability go beyond the hypotheses of the theorem. Indeed, we can deduce a much more general undecidability result with a further development of the ideas used in previous unit group-based methods \cite{mruv, Springer20}. Specifically, we only need to define ``enough'' of the maximal totally real subring to realize the $\JR$-number.

%  Indeed, the property of $K$ being closed under square roots is not stable under finite field extensions.
%
%\caleb{I want better notation below.}
%
%\begin{example} 
%Let $K_0 = \QQ$ and for each $n\geq 1$ define $K_n$ to be the maximal totally real subfield of the compositum $K_{n-1}^{(2)}$ of all quadratic extensions of $K_{n-1}$. Then $\Kmin = \cup_{n \geq 1}K_n$ is the smallest extension of $\QQ$ which is closed under square roots, and Theorem \ref{thm:def_tr} applies. It is clear that $\Kmin$ is a relatively small subfield of $\Q^{tr}$ because it is constructed to only contain elements which are solvable by radicals.
%\end{example}
%
%\begin{example}
%Let $\theta$ to be a root of $x^3 +x^2 - 2x-1$. We claim that the field  $\Kmin(\theta)$ is not closed under square roots. Specifically, $\Kmin(\theta)$ does not contain the totally real element $\sqrt{\theta^2 + 1}$. To see this, first verify that $\Q(\theta)$ is the only nontrivial subfield of the sextic field $F =  \QQ(\sqrt{\theta^2 + 1})$, e.g., using Magma \caleb{cite Magma}. Therefore, if $\Kmin(\theta)$ contains $F$, then $\Kmin$ contains $\theta$.
%\caleb{EXAMPLE NOT DONE!}
% \end{example}
 % These examples show that the hypothesis of Theorem \ref{thm:def_tr} is not stable under field extension, which is to be expected. Nonetheless, 
 
 To start, we recall the following lemma, which is C.W. Henson's extension  \cite[\S3.3]{vdd} of a result of J. Robinson \cite[Theorem 2]{jr}; see also the presentation in \cite[Lemma~2.2]{mruv}. This will be the basis for all of our undecidability results.

\begin{lemma} 
\label{lem:henson}
Let $\OO$ be a ring of algebraic integers. If there is a family $\mathcal F$ of subsets of $\OO$, parametrized by an $\mathcal L_{ring}$-formula, which contains finite sets of arbitrarily large cardinality, then $\OO$ has undecidable first-order theory.
 \end{lemma}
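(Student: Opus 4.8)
The plan is to prove undecidability by \emph{interpreting} an essentially undecidable fragment of arithmetic, such as Robinson's system $Q$ or the full theory of $(\NN, +, \cdot)$, inside the first-order theory of $\OO$, using the finite members of $\mathcal F$ as codes for natural numbers. Writing the parametrizing formula as $\varphi(x, \bar t)$, so that $\mathcal F = \{\, S_{\bar t} : \bar t \,\}$ with $S_{\bar t} = \{x \in \OO : \varphi(x, \bar t)\}$, I would attach to each parameter $\bar t$ for which $S_{\bar t}$ is finite the natural number $|S_{\bar t}|$. Since interpretability transfers undecidability, and arithmetic is undecidable by the Church--G\"odel theorem, exhibiting such an interpretation completes the proof.

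First I would record the elementary bookkeeping: for each standard $n$ the conditions $|S_{\bar t}| \ge n$ and $|S_{\bar t}| = n$ are each expressed by a single formula in $\bar t$, by asserting the existence of $n$ pairwise distinct witnesses of $\varphi(\cdot, \bar t)$ (with an added universal clause in the second case). To realize the ring operations on codes, I would leave the original family and pass to definable finite subsets of the Cartesian powers $\OO^k$: the tagged disjoint union $\{(x,0) : \varphi(x, \bar s)\} \cup \{(y,1) : \varphi(y, \bar t)\}$ is a definable finite set of cardinality $|S_{\bar s}| + |S_{\bar t}|$, and the product $\{(x,y) : \varphi(x, \bar s) \wedge \varphi(y, \bar t)\}$ is one of cardinality $|S_{\bar s}| \cdot |S_{\bar t}|$. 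Both addition and multiplication of codes are thereby reduced to the single question of recognizing when two definable finite sets have the \emph{same} number of elements; the hypothesis that $\mathcal F$ contains finite sets of arbitrarily large size then guarantees, after taking initial segments as below, that every natural number is represented.

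The hard part -- and the genuine content of the criterion, where Robinson's construction and Henson's refinement do their work -- will be to define the \emph{equicardinality} relation $|S_{\bar s}| = |S_{\bar t}|$ by a single formula in the parameters, since first-order logic cannot quantify over the bijection witnessing it. The plan is to equip the relevant definable finite sets with a \emph{uniformly definable linear order}; such an order is the linchpin of the whole argument, because it lets me (i) cut out initial segments of every size up to the unbounded cardinalities appearing in $\mathcal F$, so that all of $\NN$ is represented, and (ii) express equicardinality through the induced element-by-element pairing of the two ordered sets, coded by finitely many ring elements rather than by a quantifier over subsets. Once this order, the resulting congruence, and the operations above are in place, I would verify that they satisfy the axioms of $Q$ on the quotient and invoke Church's theorem; the remaining steps -- checking the interpretation conditions -- are routine.
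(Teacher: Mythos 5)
The paper does not actually prove this lemma: it is quoted from the literature (Henson's argument as presented in van den Dries \cite{vdd}, \S 3.3, extending J.~Robinson \cite{jr}, Theorem 2; see also \cite{mruv}, Lemma 2.2), so your attempt has to be measured against those proofs. Your high-level skeleton is indeed theirs: code natural numbers by cardinalities of the finite sets in $\mathcal F$, realize addition and multiplication of codes by tagged disjoint unions and products inside Cartesian powers of $\OO$, and reduce everything to the definability of equicardinality. The genuine gap is that the step you defer --- ``equip the relevant definable finite sets with a uniformly definable linear order'' and code the element-by-element pairing ``by finitely many ring elements'' --- is the entire content of the theorem, and your proposal never uses a single property of rings of algebraic integers, so it cannot close this gap even in principle. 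The statement is false for general structures: in $(\NN,<)$ the initial segments $\{x : x < t\}$ form a parametrized family of finite sets of arbitrarily large cardinality which moreover carries a uniformly definable linear order, yet $\mathrm{Th}(\NN,<)$ is decidable. What fails there is exactly the step you left open: equicardinality between, say, the tagged disjoint union (a definable subset of the square of the structure) and a member of the original family is not definable. So a uniform linear order is not the linchpin; the linchpin is the ability to code bijections between \emph{unboundedly} large finite sets by a \emph{bounded} number of ring elements, a G\"odel-$\beta$-style coding that must be extracted from the arithmetic of algebraic integers (divisibility and congruences $x \equiv y \bmod z$, norms, etc.). Constructing that coding is precisely what Robinson's Theorem 2 and Henson's refinement do, and it is absent here. (Note also that without auxiliary coding parameters no such uniform order can exist at all: automorphisms of $\OO$ --- e.g.\ of $\Ztr$ swapping $\pm\sqrt2$ --- permute the elements of a definable finite set and would have to preserve any order definable from the original parameters.)

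A second, independent defect is your plan to ``verify the axioms of $Q$ on the quotient and invoke Church's theorem.'' The hypothesis only says $\mathcal F$ \emph{contains} finite sets of arbitrarily large cardinality; other parameters may yield infinite sets, and finiteness of $S_{\bar t}$ is not first-order expressible, so you cannot definably restrict the domain of your interpretation to the ``good'' codes. Consequently the interpreted quotient structure need not satisfy the axioms of $Q$, and the essential-undecidability argument does not launch. The classical proofs are arranged to be immune to this: rather than interpreting a single model of $Q$, they uniformly interpret a class of structures containing all sufficiently large finite ones and invoke \emph{hereditary} undecidability of the theory of such a class (Tarski--Mostowski--Robinson / Trakhtenbrot-style results), so that extra ``junk'' structures arising from bad parameters only shrink the interpreted theory and do no harm. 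Repairing your proposal would require both this change of target theory and, more fundamentally, the number-theoretic coding discussed above.
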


  \subsection{Undecidability for totally real $\OO_K$.}
  \label{sec:dec_tr}
Before considering totally imaginary fields, we briefly sketch the $\JR$-number method for proving the undecidability of the ring of integers of a totally real field; see \cite{jr, vv-nested, vv-northcott} for more details. 

\begin{definition} 
 Given a totally real number $\alpha\in \Qbar$ and $t\in \RR$, we write $0\ll \alpha \ll t$ if every conjugate of $\alpha$ lies in the interval $(0,t)$. For a totally real subset $X\subseteq \Qbar$, write $X_t = \{\alpha \in X : 0\ll \alpha\ll t\}$.
The $\JR$-number of $X$ is
$$
	\JR(X) = \inf\{t\in \RR : \#X_t =\infty\}.
$$
 \end{definition}

In particular, when $K$ is totally real, J. Robinson \cite{jr} observed that if $\JR(\OO_K)$ is either $\infty$ or a minimum, then Lemma \ref{lem:henson} implies that $\OO_K$ is undecidable.
 Indeed, $\{X_t\}_{t\in \QQ}$ can be realized as a parametrized family of existentially definable sets when $X = \OO_K$ and $t\in\QQ$, as written below.
 This follows from Siegel's proof that the totally nonnegative elements of $K$ are precisely the elements which are sums of four squares \cite{siegel}. 
Consequently, J. Robinson deduced that the ring of totally real algebraic integers $\Ztr$ is undecidable.

\begin{theorem}
\label{thm:def_siegel}
Let $K$ be a totally real field. If $X = \OO_K$ and $t =\frac{a}{b}\in \QQ$, then $X_t$ is defined in $\OO_K$ by the formula:
    $$
    	\exists y_0,\dots, y_8\in \OO_K [bxy_0^2\neq 0 \wedge bxy_0^2 \neq a \wedge xy_0^2 = y_1^2 + \dots + y_4^2 \wedge (a-bxy_0^2) = y_5^2 +\dots + y_8^2]
    $$
 \end{theorem}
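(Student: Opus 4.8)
The plan is to prove the two inclusions between $X_t$ and the set cut out by the displayed formula separately, the only arithmetic input being Siegel's theorem recalled just above: an element $\gamma\in K$ is totally nonnegative if and only if $\gamma$ is a sum of four squares in $K$ \cite{siegel}. Write $\beta=xy_0^2$, and read the two four-square blocks as the assertions that $\beta$ and $(a-bx)y_0^2=ay_0^2-bxy_0^2$ are sums of four squares in $\OO_K$, i.e. that $x$ and $a-bx$ are totally nonnegative once the common factor $y_0^2$ is taken into account. The key structural feature I will exploit is that the single multiplier $y_0^2$ occurs in \emph{both} blocks and is totally positive, so after applying Siegel's theorem the factor $\sigma(y_0)^2>0$ can be cancelled from the resulting sign conditions; the two disequations will then supply the strictness.

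For the implication ``formula $\Rightarrow x\in X_t$'', start from a witness $y_0,\dots,y_8\in\OO_K$. A sum of four squares in $\OO_K$ is a fortiori one in $K$, so Siegel's theorem makes $\beta$ and $(a-bx)y_0^2$ totally nonnegative; that is, $\sigma(x)\sigma(y_0)^2\ge 0$ and $\sigma(a-bx)\sigma(y_0)^2\ge 0$ for every real embedding $\sigma$ of $K$. The disequations guarantee $x\neq 0$ and $a-bx\neq 0$, and since each $\sigma$ is an injective field homomorphism this forces $\sigma(x)\neq 0$ and $\sigma(a-bx)\neq 0$ for all $\sigma$; in particular $y_0\neq 0$, so $\sigma(y_0)^2>0$. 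Cancelling this positive factor upgrades the inequalities to $\sigma(x)>0$ and $\sigma(a-bx)>0$ for every $\sigma$, i.e. $0<\sigma(x)<a/b=t$. Hence $0\ll x\ll t$ and $x\in X_t$.

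For the reverse inclusion ``$x\in X_t\Rightarrow$ formula'', suppose $0\ll x\ll t$. Then both $x$ and $a-bx$ are totally positive (the latter because $\sigma(x)<a/b$ for all $\sigma$), so Siegel's theorem writes each of them as a sum of four squares of \emph{field} elements of $K$. I would then choose a single $y_0\in\OO_K\setminus\{0\}$ serving as a common denominator for all eight of these elements; multiplication by $y_0^2$ clears denominators and exhibits $xy_0^2$ and $(a-bx)y_0^2$ as sums of four squares in $\OO_K$, which furnishes $y_1,\dots,y_8$. Finally the strict bounds give $x\neq 0$ and $a-bx\neq 0$, hence the required disequations hold.

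The step I expect to demand the most care is the joint role of the multiplier $y_0$: it must simultaneously clear denominators in the passage from representations over $K$ to representations over $\OO_K$ (which tends to force $y_0$ to be ``large'') while not disturbing the two-sided interval bound. The resolution, and the reason one $y_0$ suffices, is exactly that $y_0^2$ enters both four-square blocks symmetrically and is totally positive, so the membership $x\in X_t$ is detected through the \emph{signs} of $\sigma(x)$ and $\sigma(a-bx)$ after cancelling $\sigma(y_0)^2$, independently of the size of $y_0$. The secondary point to handle cleanly is the promotion of the non-strict inequalities produced by Siegel's theorem to the strict inequalities defining the open interval $(0,t)$; this is precisely what the two disequations accomplish, via the injectivity of the real embeddings $\sigma\colon K\hookrightarrow\RR$.
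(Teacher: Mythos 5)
Your strategy---Siegel's four-square theorem plus a common denominator $y_0$, with the disequations supplying strictness and cancellation of $\sigma(y_0)^2$ converting sign conditions on $xy_0^2$ into sign conditions on $x$---is exactly the argument the paper intends; the paper gives no proof beyond citing Siegel and remarking that $y_0$ is a common denominator. But your write-up does not prove the printed statement, and no argument can, because the printed formula does not define $X_t$. You silently read the second block as $(a-bx)y_0^2=y_5^2+\dots+y_8^2$, whereas the statement prints $a-bxy_0^2=y_5^2+\dots+y_8^2$; these differ by $a(y_0^2-1)$, and the difference is fatal. In $K=\QQ(\sqrt{2})$ with $a/b=2$, take $x=3+2\sqrt{2}=(1+\sqrt{2})^2$ and $y_0=\sqrt{2}-1$: then $xy_0^2=1$ and $a-bxy_0^2=1$, so the printed formula holds with $y_1=y_5=1$ and all other $y_i=0$, yet $x\notin X_2$ since its conjugate $3+2\sqrt{2}$ exceeds $2$. (Every totally positive unit enters this way, so the printed sets can be infinite, which would also break the application in Theorem \ref{thm:undec_gen}.) Your reading is the correct repair, but a complete solution must state explicitly that the printed formula is being corrected rather than prove a different statement without comment.

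Second, and independently of that repair, there is a genuine error inside your forward direction: you assert that ``the disequations guarantee $x\neq 0$ and $a-bx\neq 0$.'' The first half is fine, since $bxy_0^2\neq 0$ forces $x\neq 0$ and $y_0\neq 0$; but the printed second disequation is $bxy_0^2\neq a$, and this does \emph{not} imply $bx\neq a$. Concretely, if $t=a/b$ lies in $\OO_K$ (equivalently $t\in\ZZ_{>0}$, e.g.\ whenever $b=1$), take $x=t$ and $y_0=2$: then $bxy_0^2=4a\neq a$, $xy_0^2=4t$ is a sum of four integer squares by Lagrange, and $(a-bx)y_0^2=0$ is trivially a sum of four squares, so $x=t$ satisfies every condition you use even though $t\notin X_t$ (the interval is open). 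Thus your argument only yields $0<\sigma(x)\le a/b$, not the strict upper bound. The disequation must be corrected to $bxy_0^2\neq ay_0^2$ (equivalently $bx\neq a$); with that change your cancellation argument closes the forward direction, and your reverse direction also becomes clean---note that with the literal disequation $bxy_0^2\neq a$ your reverse direction would additionally require choosing $y_0$ so that $bxy_0^2\neq a$ (easily arranged by rescaling $y_0$ by an integer), a point your final sentence passes over as if it were automatic. In short, your skeleton is the intended one, but the two places where you tacitly depart from the literal formula are precisely where it needs repair, and your proof claims the printed disequation does a job it cannot do.
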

 
 Note that, in general, algebraic integers might be written as the sum of squares of algebraic non-integers, so $y_0$ plays the role of a common denominator.

  \subsection{Undecidability for totally imaginary $\OO_L$.}
  We are now ready to deploy a modified version of the $\JR$-method for totally imaginary fields; compare with \cite{mruv, Springer20}.
    
 \begin{theorem} 
 \label{thm:undec_gen}
 Let $K_0$ be a totally real field for which $\JR(\OO_{K_0})$ is either an attained minimum or $\infty$, and let $K_1$ be the maximal totally real subfield of $K_0^{(2)}$.  If $K\supseteq K_1$ is any totally real field with $\JR(\OO_K) = \JR(\OO_{K_0})$, and $L$ is any totally imaginary quadratic extension of $K$, then the first order theory of $\OO_L$ is undecidable.
  \end{theorem}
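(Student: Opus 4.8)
The plan is to apply Henson's criterion (Lemma \ref{lem:henson}): I will exhibit a family of subsets of $\OO_L$, parametrized by a single $\mathcal{L}_{ring}$-formula with integer parameters $a,b$, that contains finite sets of arbitrarily large cardinality. The members of the family will be transported copies, inside the totally imaginary ring $\OO_L$, of the ``interval'' sets $(\OO_{K_0})_t$ and $(\OO_K)_t$ for $t = a/b \in \QQ$, realized through the existentially definable totally real set $X$ furnished by Theorem \ref{thm:def_tr_loose}, which satisfies $\OO_{K_0}\subseteq X\subseteq \OO_K$.

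First I would fix $K_0,K_1,K,L$ as in the statement and recall the existentially definable $X\subseteq\OO_L$ with $\OO_{K_0}\subseteq X\subseteq\OO_K$. For $t=a/b\in\QQ$ I define, by an existential formula over $\OO_L$, the set $D_t$ consisting of those $x\in X$ for which there exist $y_0,\dots,y_4,z_0,\dots,z_4\in X$ with $y_0,z_0$ nonzero satisfying the Siegel sum-of-four-squares identities $x\,y_0^2 = y_1^2+\cdots+y_4^2$ and $(a-bx)\,z_0^2 = z_1^2+\cdots+z_4^2$, together with nonvanishing conditions forcing the resulting inequalities to be strict. The crucial point of drawing the auxiliary elements $y_i,z_i$ from $X$, rather than from all of $\OO_L$, is that every element of $X\subseteq\OO_K$ is totally real, so its square is totally nonnegative; this is exactly what makes these identities genuine certificates of total positivity inside a ring where arbitrary squares are complex.

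I would then establish the sandwich $(\OO_{K_0})_t\subseteq D_t\subseteq(\OO_K)_t$. For the right-hand inclusion, any $x\in D_t$ lies in $X\subseteq\OO_K$ and is therefore totally real; the two identities (using that $y_0^2$ and $z_0^2$ are totally positive, since a nonzero algebraic integer has all conjugates nonzero) force $x$ and $a-bx$ to be totally nonnegative, and the strictness conditions then place every conjugate of $x$ strictly inside $(0,t)$. For the left-hand inclusion, if $x\in(\OO_{K_0})_t$ then $x$ and $a-bx$ are totally positive elements of $\OO_{K_0}$, so Siegel's theorem (Theorem \ref{thm:def_siegel}) over $K_0$ writes each as a sum of four squares in $K_0$; clearing denominators produces the required $y_i,z_i\in\OO_{K_0}\subseteq X$, whence $x\in D_t$.

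Finally I would carry out the $\JR$-number count. Writing $c=\JR(\OO_K)=\JR(\OO_{K_0})$, the right inclusion gives $D_t\subseteq(\OO_K)_t$, which is finite whenever $t<c$ (and, when $c=\infty$, for every finite $t$), while the left inclusion gives $\#D_t\geq\#(\OO_{K_0})_t$, and these cardinalities tend to infinity as $t$ increases toward $c$: in the case $c=\infty$ because $(\OO_{K_0})_t$ already contains all rational integers in $(0,t)$, and in the attained-minimum case because $(\OO_{K_0})_c$ is the increasing union of the finite sets $(\OO_{K_0})_t$ over rational $t<c$ and is infinite. Thus the family $\{D_{a/b}\}$ contains finite subsets of $\OO_L$ of arbitrarily large cardinality, and Lemma \ref{lem:henson} yields the undecidability of $\OO_L$. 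I expect the main obstacle to be precisely the middle step, namely certifying total positivity and boundedness by sums of squares \emph{within} $\OO_L$; this is what forces the detour through the definable totally real set $X$, and it is exactly the hypothesis $\JR(\OO_K)=\JR(\OO_{K_0})$ that keeps $D_t$ finite while its subset $(\OO_{K_0})_t$ grows without bound.
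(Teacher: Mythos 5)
Your proposal is correct and follows essentially the same route as the paper: both obtain the existentially definable totally real set $X$ with $\OO_{K_0}\subseteq X\subseteq\OO_K$ from Theorem \ref{thm:def_tr_loose}, relativize the Siegel sum-of-four-squares interval formula of Theorem \ref{thm:def_siegel} to $X$, sandwich the resulting parametrized sets between $\{x\in\OO_{K_0} : 0\ll x\ll a/b\}$ and $\{x\in\OO_K : 0\ll x\ll a/b\}$, and conclude by combining the $\JR$-number hypothesis with Henson's criterion (Lemma \ref{lem:henson}). The only deviations are cosmetic: you use separate denominators $y_0,z_0$ for the two sum-of-squares certificates and require $x\in X$, whereas the paper uses a single denominator and lets $x$ range over $\OO_L$.
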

   \begin{proof} 
By Theorem \ref{thm:def_tr_loose}, there is an existentially definable subset $X  \subseteq \OO_L$ satisfying
    $$
    	\OO_{K_0} \subseteq X \subseteq \OO_K.
    $$
    Therefore, the parametrized formula $\phi_X(x; a,b)$ defined by    $$
    	\exists y_0,\dots, y_8\in X  [bxy_0^2\neq 0 \wedge bxy_0^2 \neq a \wedge xy_0^2 = y_1^2 + \dots + y_4^2 \wedge (a-bxy_0^2) = y_5^2 +\dots + y_8^2]
    $$
    defines sets which satisfy the containments
    \begin{equation*}
    	\left\{x \in \OO_{K_0} : 0\ll x\ll \frac{a}{b}\right\}
    	\subseteq \{ x\in \OO_L : \phi_X(x;a,b) \}
	\subseteq \left\{x\in \OO_K : 0\ll x\ll \frac{a}{b}\right\}.
    \end{equation*} 
    The containments follow from Theorem \ref{thm:def_siegel}.
    Because $\JR(\OO_K) = \JR(\OO_{K_0})$ is either infinite or an attained minimum, the sets on the lefthand and righthand sides above are finite sets of arbitrarily large size as $\frac{a}{b}$ ranges over positive rational numbers approaching $\JR(\OO_K)$ from the left.  Therefore, by Lemma \ref{lem:henson}, we have proved first-order undecidability.
   \end{proof}
   
   There are two extremes for the JR-number of a ring of totally real algebraic integers, namely 4 and $\infty$.  For example, the fields $K_0 = \QQ$ and $K_1 = \QQ(\{\sqrt n : n\geq 1\})$ have $\JR(\OO_{K_0}) = \JR(\OO_{K_1}) = \infty$. Applying Theorem \ref{thm:undec_gen} in this case leads to a recovery of some of the results that appeared in \cite{mruv, Springer20}.  At the other extreme, we write the following.
    
    \begin{theorem} 
    \label{thm:undec_JR4}
    Let $S$ be any infinite set of positive integers, let $K_0 = \QQ(\{\zeta_n + \overline\zeta_n : n\in S\})$, and let $K_1$ be the maximal totally real subfield of $K_0^{(2)}$. If $K$ is any totally real extension of $K_1$ and $L$ is any totally imaginary quadratic extension of $K$, then the first order theory of $\OO_L$ is undecidable.
     \end{theorem}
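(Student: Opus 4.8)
The plan is to reduce Theorem \ref{thm:undec_JR4} to the already-proven Theorem \ref{thm:undec_gen} by verifying its two hypotheses for the specific field $K_0 = \QQ(\{\zeta_n + \overline\zeta_n : n\in S\})$. Theorem \ref{thm:undec_gen} requires that $\JR(\OO_{K_0})$ be either an attained minimum or $\infty$, and that the given $K$ satisfy $\JR(\OO_K) = \JR(\OO_{K_0})$. Since the hypotheses of Theorem \ref{thm:undec_JR4} only require $K \supseteq K_1$ with no a priori constraint on $\JR(\OO_K)$, the crux of the argument must be a \emph{stability} statement: that $\JR$ of the maximal totally real subring takes the minimal value $4$ and that this value is forced for every totally real extension $K$ of $K_1$. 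So the real content is showing $\JR(\OO_{K_0}) = 4$ is an attained minimum, and that $\JR$ cannot drop below $4$ for any larger totally real field.

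**First I would** establish the universal lower bound $\JR(\OO_K) \geq 4$ for any totally real field $K$, together with the fact that $4$ can never be undercut. The key tool is Siegel's/Kronecker-type analysis already invoked in Section \ref{sec:dec_tr}: a totally real algebraic integer all of whose conjugates lie in a short interval $(0,t)$ with $t \leq 4$ must, by a trace or Schur--Siegel--Smyth type estimate, lie in a finite set unless $t \geq 4$. Concretely, if every conjugate of $\alpha \in \OO_K$ lies in $(0,4)$, then writing $\alpha = \beta + 2$ one sees the conjugates of $\beta$ lie in $(-2,2)$, and for totally real algebraic integers confined to $(-2,2)$ a classical result (Kronecker's theorem, via the substitution $\beta = \gamma + \gamma^{-1}$ tied to roots of unity $\gamma$) shows there are only finitely many of each bounded degree and that the accumulation of infinitely many such $\alpha$ requires the closed interval endpoint $t = 4$. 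This simultaneously shows $\JR(\OO_K) \geq 4$ always, and identifies $4$ as the minimal possible value, so that if we prove $\JR(\OO_{K_0}) = 4$ it is automatically an attained minimum and automatically equals $\JR(\OO_K)$ for every $K \supseteq K_0$.

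**Next I would** produce, for the specific $K_0$, infinitely many totally real integers with all conjugates in $(0,4)$ to force $\JR(\OO_{K_0}) \leq 4$; this is exactly where the generators $\zeta_n + \overline\zeta_n$ enter. For each $n \in S$ the element $\zeta_n + \zeta_n^{-1} = 2\cos(2\pi/n)$ is a totally real algebraic integer whose conjugates are the values $2\cos(2\pi k/n)$ for $k$ coprime to $n$, all lying in $(-2,2)$; hence $\alpha_n := \zeta_n + \overline\zeta_n + 2 = 2 + 2\cos(2\pi/n) \in \OO_{K_0}$ has all conjugates strictly in $(0,4)$. Because $S$ is infinite, these $\alpha_n$ have unbounded degree over $\QQ$, so $\{\alpha_n : n \in S\}$ is an infinite subset of the set $\{\alpha \in \OO_{K_0} : 0 \ll \alpha \ll 4\}$, giving $\JR(\OO_{K_0}) \leq 4$. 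Combined with the lower bound this yields $\JR(\OO_{K_0}) = 4$, attained.

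**The main obstacle** I anticipate is the lower-bound/stability step rather than the constructive upper bound: one must argue carefully that no totally real extension $K \supseteq K_1$ can acquire infinitely many integers confined to $(0,t)$ for any $t < 4$, i.e.\ that the bound $\JR(\OO_K) \geq 4$ is truly field-independent. This rests on the fact that there are only finitely many totally real algebraic integers of \emph{any fixed degree} with all conjugates in a fixed compact subinterval of $(0,4)$ (a Northcott-type finiteness), so that forcing an infinite family into $(0,t)$ requires unbounded degree, and the only way to accumulate such integers with conjugates squeezed below $4$ is via the $2 + 2\cos$ family whose conjugates approach but do not cross $4$. Once $\JR(\OO_K) = 4 = \JR(\OO_{K_0})$ is confirmed for all admissible $K$, the hypotheses of Theorem \ref{thm:undec_gen} are met and the undecidability of $\OO_L$ follows immediately for every totally imaginary quadratic extension $L$ of $K$.
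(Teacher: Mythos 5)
Your proposal is correct and follows essentially the same route as the paper: verify the hypotheses of Theorem \ref{thm:undec_gen} by using the family $\zeta_n + \overline\zeta_n + 2$ to show $\JR(\OO_{K_0}) = 4$ is an attained minimum, invoke Kronecker's characterization of totally real algebraic integers with conjugates in $(0,4)$ (the paper cites Kronecker and R.~M.~Robinson for exactly this) to get the universal lower bound and hence $\JR(\OO_K) = \JR(\OO_{K_0}) = 4$ for every totally real $K \supseteq K_1$, and conclude. The paper compresses the lower-bound/stability analysis you spell out into these citations, but the substance is identical.
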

      \begin{proof} 
     Under the given hypotheses, $\JR(\OO_{K_0}) = 4$ is the smallest possible $\JR$-number and is realized as a minimum. Indeed, we have $0\ll \zeta_n + \overline \zeta_n + 2\ll 4$ for all $n\in S$, and these are the only totally real elements of $\Qbar$ with this property; see Kronecker \cite{Kronecker} and also \cite{RMR}.
     Therefore, $\JR(\OO_{K_0}) = \JR(\OO_K)$ is clear and Theorem \ref{thm:undec_gen} applies.
      \end{proof}

In recent years, examples of totally real fields $K$ have been discovered for which $\JR(\OO_K)$ is neither $4$ nor $\infty$, i.e., $\JR(\OO_K)$ is non-extremal; see \cite{castillo-thesis, cfvv, gr, vv-nested}. A better understanding of the behavior of $\JR$-numbers under field extension would be required before applying Theorem \ref{thm:undec_gen} to fields $K_0$ for which $\JR(\OO_{K_0}) \in (4,\infty)$ is non-extremal.

\end{document}